\newcommand{\R}{{\mathbf{R}}}
\newcommand{\C}{{\mathbf{C}}}
\newcommand{\N}{\mathbf{N}}
\newtheorem{theorem}{Theorem}
\newtheorem{lemma}[theorem]{Lemma}
\newtheorem{proposition}[theorem]{Proposition}
\newtheorem{definition}[theorem]{Definition}
\newtheorem{remark}[theorem]{Remark}
\begin{document}


\title{Effective adiabatic control of a decoupled Hamiltonian obtained by rotating wave approximation} 

\author[1]{Nicolas Augier}
\author[2]{Ugo Boscain}
\author[3]{Mario Sigalotti}
\affil[1]{Inria, Universit\'{e} C\^{o}te d'Azur, INRA, CNRS, Sorbonne Universit\'{e}, Sophia Antipolis, France}
\affil[2,3]{Laboratoire Jacques-Louis Lions, CNRS, Inria, Sorbonne Universit\'e, Universit\'e de Paris, Paris, France}

\twocolumn[
  \begin{@twocolumnfalse}
    \maketitle
    \begin{abstract}
In this paper
we study up to which extent we can apply adiabatic control strategies to a quantum control model obtained by rotating wave approximation. 
In particular, we show that, under suitable assumptions on the asymptotic regime between the 
parameters characterizing the rotating wave and the adiabatic approximations, the induced flow converges to the one obtained by considering the two approximations separately and by combining them formally in cascade. 
As a consequence, we propose explicit control laws which can be used to induce desired populations transfers, robustly with respect to parameter dispersions in the controlled Hamiltonian.
    \end{abstract}
  \end{@twocolumnfalse}
  ]

\section{Introduction}

A common and fruitful approach to study the controllability properties of a quantum system
consists in 
replacing 
the controlled Hamiltonian $H(u)$ by an effective Hamiltonian  $H_{\rm eff}(\hat u)$ with more degrees of freedom, 
in such a way that the dynamics induced by $H_{\rm eff}$  
can be approximated arbitrarily well by the trajectories corresponding to $H$.


One of the most popular  procedures to do so is based on the so-called \emph{rotating wave 
approximation} and works as follows: up to a time-dependent change of coordinates (which consists in adopting a rotating coordinate frame driven by the drift Hamiltonian $H(0)$ corresponding to the control $u=0$), 
and up to taking as control the  superposition of monochromatic pulses in resonance with the spectral gaps of the system, we can split 
the resulting dynamics into those terms which oscillate fast 
and those which 
evolve relatively slowly.
Then, if we assume that the control is applied on a time-interval of length $T$ (much larger than the period of the oscillating terms)
and is of  amplitude of order $1/T$, then the effect of the oscillating terms can be neglected by an averaging argument and the effect of the remaining 
terms is of order $1$ (see, e.g. \cite{Guerin98,Guerin99,Irish,shore-book}). 
The increase in the amount of degrees of freedom of the effective Hamiltonian obtained in this way comes from the fact that the amplitude and phase of every monochromatic pulse applied as control in the original system play the role of independent controls in the resulting effective system. We say that the original Hamiltonian is \emph{decoupled}, in the sense that it can be seen as the linear combination of Hamiltonians of smaller rank which can be controlled independently.

For a quantum control systems with several degrees of freedom, a popular control strategy 
is based on adiabatic approximation. If the controlled Hamiltonian varies slowly, 
a trajectory having as initial condition an eigenvector approximately follows the quasi-static curve 
corresponding to the eigenvectors of the non-autonomous Hamiltonian. The property  of having several degrees of freedom in the controlled Hamiltonian allows to design 
loops in the space of controls
whose corresponding adiabatic trajectories drive the system from an energy level  to a different one \cite{Bos,SHAP,Teu,Vitanov}.  
The advantage of the adiabatic control strategy (instead of, for instance, Rabi pulses) is 
that it is robust to parameter incertainties in the Hamiltonian.  In particular, it can be used to drive ensembles of quantum systems \cite{Ensemble,RouchonSarlette,shore-book,Vitanov}. 

It is then tempting to adopt an adiabatic control strategy to the quantum system corresponding to the decoupled Hamiltonian obtained by the rotating wave approximation. The resulting control for the original system is 
a superposition of monochromatic pulses 
whose amplitude and phase vary slowly. 
Actually, since the precision of both adiabatic and rotating wave approximations depend on 
the length of the time-interval on which the control is defined, the concatenation of the two strategies leads to a `doubly slow' control.  
More precisely, the resulting control 
is defined on a interval of the type $[0,1/(\epsilon_1 \epsilon_2)]$, where the 
rotating wave approximation becomes more accurate as $\epsilon_1\to 0$, while the 
adiabatic strategy converges to the desired target as $\epsilon_2\to 0$. However, the convergence of the rotating wave approximation is guaranteed only when $\epsilon_2>0$ is fixed.
The concatenation of the two time-scales approximations is then not guaranteed to hold, in general, as $(\epsilon_1,\epsilon_2)\to (0,0)$. 

In \cite{CDC-nicolas} we considered this question for two-level systems with a simple structure. 
In particular, seting
$\epsilon_1=\epsilon_2^\alpha$, 
we showed that the controls obtained by concatenating the two strategies do steer the system approximately close to the desired trajectories, at least when $\alpha>1$. 

Here we extend the results of  \cite{CDC-nicolas} by considering more general quantum control systems. More precisely, we consider single-input control-affine systems (i.e., we take 
$H(u)=H_0+u H_1$, 
with $H_0$ and $H_1$ self-adjoint $n\times n$ matrices),
and we show how to identify the corresponding decoupled Hamiltonian corresponding to the rotating wave approximation. Then, we prove that, under the assumption that $\epsilon_1=\epsilon_2^\alpha$ for some   $\alpha>1$, the trajectories of the original system corresponding to the pulses obtained by the formal cascade of the two approximations converge  
to the adiabatic trajectories of the decoupled Hamiltonian. 
The proof is based on adapted quantitative averaging results for unbounded oscillating vector fields on the unit group $U(n)$, in the spirit of \cite{Kurzweil88,Liu97}.

Under some simplifying assumption on the resonances of the system (which lead to simpler expressions of the decoupled Hamiltonian), we also give explicit expressions of the pulses leading to generalized (i.e., multilevel) chirped pulses and STIRAP trajectories of the decoupled Hamiltonian.

As we recalled above, one of the main reasons to adopt an adiabatic control strategy is that it guarantees 
precious robustness properties with respect to the parameters of the controlled Hamiltonian. Our results  show that the control strategy that we propose is still robust with respect to the parameter dispersions in $H_1$, 
although this is not in general true for the parameter dispersions in the drift Hamiltonian $H_0$. 
The case of parameter dispersions in $H_0$
requires a different strategy 
and
is discussed for two-level systems in~\cite{preprint-Remi}.


The paper is organized as follows. In Section~\ref{sec:decou} we give the general expression of the decoupled Hamiltonian obtained by rotating wave approximation for single-input bilinear 
quantum systems.
We also state the main result about the effectiveness of the concatenation of the adiabatic control 
of the decoupled Hamiltonian and the rotating wave approximation (Theorem~\ref{nlevel:thm:res}), and we 
give the general expression of the corresponding pulses. Section~\ref{sec:proof} contains the proof of Theorem~\ref{nlevel:thm:res} and the averaging results used to obtained it. 
Finally, in Section~\ref{sec:strategies}, we apply the general 
construction to the case where 
the original Hamiltonian satisfy a suitable non-resonance simplifying property, and we identify some explicit adiabatic controls for the decoupled Hamiltonian that induce a complete population transfer between the energy levels of the drift Hamiltonian. 

 \section{Decoupled Hamiltonian
and its induced adiabatic evolution}\label{sec:decou}

Fix $n\in \N$ and 
let $H_0,H_1\in i{\mathfrak u}(n)
$, where ${\mathfrak u}(n)$ denotes the Lie algebra of $n\times n$ skew-adjoint matrices. 
Consider the 
system 
\begin{equation}\label{Eq:gen}
i\frac{d\psi(t)}{dt}=(H_0+u(t) H_1)\psi(t),\qquad \psi(t)\in \C^n,
\end{equation} 
where the control $u$ takes values in $\R$.
Up to a 
unitary change of variables, we can assume that
$H_0=\mathrm{diag}(E_j)_{j=1}^n$ with $E_1,\dots,E_n\in \R$.
Define 
 \[\Xi=\left\{|E_j-E_k| \mid  (j,k)\in \{1,\dots,n\}^2,\, 
(H_{1})_{j,k}
\neq 0
\right\}
\]
i.e., $\Xi$ is the set of nonnegative spectral gaps of $H_0$ corresponding to a direct coupling by the controlled Hamiltonian $H_1$. 
Let us now use the spectral gaps  in $\Xi$ to identify 
a controlled Hamiltonian which corresponds to a decoupling
of $H_1$. 
The effectivity of 
such a decoupling 
is illustrated in the next sections.

For $\sigma\in \Xi$, let 
$\mathcal{R}_{\sigma}\subset  \{1,\dots,n\}^2$ and 
$H_1^\sigma\in i\mathfrak{u}(n)$ be defined by
\begin{align}
\mathcal{R}_{\sigma}&=\left\{(j,k)
\mid  
E_{j}-E_{k}=\sigma,\,
 (H_{1})_{j,k}\neq 0
\right\},\label{Rsigma}\\
(H_1^\sigma)_{j,k}&=
\begin{cases}
(H_1)_{j,k}&\mbox{if }|E_j-E_k|=\sigma,\\
0&\mbox{otherwise}.
\end{cases}
\nonumber
\end{align}
The idea of the decoupling is to consider that  each of the matrices $H_1^\sigma$ can be controlled autonomously. More precisely, define 
the \emph{decoupled Hamiltonian}
$H_{\rm d}:\R^{n}\times \R^{\Xi}\to i{\mathfrak u}(n)
$ 
 by
\begin{equation}\label{H-decoupled}
H_{\rm d}(\delta,w)=
\sum_{j=1}^n \delta_j e_{jj}+\sum_{\sigma\in \Xi} w_{\sigma}H_{1}^\sigma,
\end{equation}
where $\R^{\Xi}$ denotes the set of real vectors $(w_{\sigma})_{\sigma\in \Xi}$, and, for every $j,k\in \{1,\dots,n\}$, $e_{jk}$ is the $n\times n$ matrix whose $(j,k)$-coefficient is equal to $1$ and the others are equal to $0$.

\subsection{Adiabatic evolution of the decoupled Hamiltonian}



For every $k\in\{1,\dots,n\}$ consider a  smooth function $\varphi_k:[0,1]\to \R$ such that $\varphi_k(0)=0$.
Consider, in addition, a family of smooth functions 
$v_{\sigma}:[0,1]\to \R$,  $\sigma\in \Xi$, and, if $0\not\in \Xi$, set $v_0\equiv 0$.

Assume, moreover, that 
\[\tag{$4$}\begin{aligned}
&\varphi_j-\varphi_k=\varphi_p-\varphi_q\\
&\mbox{if }\exists\, \sigma\in \Xi
 \mbox{ s.t. }(j,k),(p,q)\in \mathcal{R}_{\sigma}\mbox{ and }v_\sigma\ne 0, \label{eq:nonres-phi}
\end{aligned}
\]
where by $v_\sigma\ne 0$ we mean that $v_\sigma$ is not identically equal to zero. 
Concerning the solvability of the constraint \eqref{eq:nonres-phi}, notice that, for every $\phi\in C^\infty([0,1],\R)$ such that $\phi(0)=0$, the choice $\varphi_k=\phi$ for every $k=1,\dots,n$ satisfies \eqref{eq:nonres-phi}.
In general, there exists a linear subspace $L$ of $\R^n$ of dimension at least one 
such that
\eqref{eq:nonres-phi} is satisfied if and only if 
the vector valued function $\varphi=(\varphi_j)_
{j=1}^n:[0,1]\to \R^n$ takes values in $L$.

Given $\varphi=(\varphi_j)_
{j=1}^n:[0,1]\to L$ and $\sigma\in \Xi$
such that $v_\sigma\ne 0$, we 
define 
\[
\hat \varphi_\sigma=\varphi_j-\varphi_k,\qquad\mbox{for }(j,k)\in \mathcal{R}_{\sigma}. 
\]

Let $\alpha>1$ and define 
\begin{equation}\label{eq:ueps}
\begin{aligned} &u_{\epsilon}(t)=
\epsilon v_0(\epsilon^{\alpha+1} t)\\
&+2\epsilon \sum_{\sigma\in \Xi\setminus\{0\}} v_{\sigma}(\epsilon^{\alpha+1} t)
\cos\left(\sigma t+\frac{\hat\varphi_\sigma(\epsilon^{\alpha+1}t)}{\epsilon}\right).\end{aligned}
\end{equation} 


Denote 
$v=(v_{\sigma})_{\sigma\in \Xi}$ and 
let $h_{\rm d}:[0,1]\to  i{\mathfrak u}(n)
$ be the non-autonomous Hamiltonian 
\begin{equation}\label{hatH}
h_{\rm d}(\tau)=H_{\rm d}(-\varphi'(\tau),v(\tau)),\qquad \tau\in[0,1].
\end{equation}
Before stating our main result on the approximation by solutions of system \eqref{Eq:gen} of the adiabatic trajectories induced by the non-autonomous Hamiltonian $h_{\rm d}$, let us introduce the 
following gap condition. 
\begin{definition}
For $h\in C^\infty([0,1],i{\mathfrak u}(n)
)$, 
let
$\lambda_1,\dots,\lambda_n:[0,1]\to \R$ 
be smooth and such that $\{\lambda_1(\tau),\dots,\lambda_n(\tau)\}$ is the spectrum of $h(\tau)$ for every $\tau\in [0,1]$.
We say that $h$ satisfies a \emph{gap condition} if there exists $C>0$ such that 
\begin{equation*}\label{gap}
\tag{GAP}
\begin{aligned}
\forall j,\ell \in \{1,\dots,n\}\mbox{ s.t. }j\ne \ell,\; \\
\forall \tau\in [0,1],\qquad |\lambda_j(\tau) -\lambda_{\ell}(\tau)|\ge C.
\end{aligned}
\end{equation*}
Given a nonnegative integer $\kappa$, 
we say that $h$ satisfies a \emph{$\kappa$-th order gap condition} if  
\begin{equation*}\label{kgap}
\tag{$\kappa$-GAP}
\begin{aligned}
\forall j,\ell \in \{1,\dots,n\}\mbox{ s.t. }j\ne \ell,  \forall \tau\in [0,1],\; \\
\begin{cases}\exists r\in\{0,\dots,\kappa\}&\mbox{s.t. }\frac{d^r (\lambda_{\ell}-\lambda_j)}{d\tau^r}(\tau)\neq 0,\\
\lambda_p(\tau)\neq\lambda_{j}(\tau)&\mbox{for }p\ne j,\ell.
\end{cases}
\end{aligned}
\end{equation*}
\end{definition} 
\begin{remark}
Notice that a $0$-th order gap condition is nothing else that a gap condition. 
Concerning the regularity of eigenvalues, it is known if $h\mapsto h(\tau)$ is $C^\infty$, and if the order of contact of any two unequal eigenvalues of $h$ is finite, then all the eigenvalues and all the eigenvectors 
$h(\tau)$ 
can be chosen to be $C^\infty$ with respect to $\tau$ 
\cite[Theorem 7.6]{AKML1998}. 
\end{remark}


Our main result is the following. 

\begin{theorem}\label{nlevel:thm:res}
Set $\psi_0\in \C^n$ and let $\psi_{\epsilon}(\cdot)$ be the solution of
\begin{equation*}
i\frac{d\psi_{\epsilon}(t)}{dt}=\left(H_0+u_{\epsilon}(t)H_1\right) \psi_{\epsilon}(t),\qquad 0\le t\le\frac1{\epsilon^{\alpha+1}},
\end{equation*} 
with $\psi_{\epsilon}(0)=\psi_0$.
%
Define $\hat{\Psi}_{\epsilon}$ 
as the solution of
\begin{equation}
\frac{d\hat{\Psi}_{\epsilon}(s)}{ds}=h_{\rm d}(\epsilon s) \hat{\Psi}_{\epsilon}(s),\qquad 0\le s\le \frac1\epsilon,
\end{equation} 
with $\hat{\Psi}_{\epsilon}(0)=\psi_0$.
Assume  that $h_{\rm d}$ satisfies a $\kappa$-th order gap condition for some nonnegative integer $\kappa$.
Then
 \[\left\|\psi_{\epsilon}\left(\frac{\tau}{\epsilon^{\alpha+1}}\right) -V_{\epsilon}
 (\tau)\hat{\Psi}_{\epsilon}\left(\frac{\tau}{\epsilon}\right)\right\|<c \epsilon^{\min(\frac1{\kappa+1},\alpha-1)}\] 
for $0\le \tau\le 1$, where
  $V_{\epsilon}(\tau)=\mathrm{diag}\left(e^{-i(\frac{E_{j}\tau}{\epsilon^{\alpha+1}}+\frac{\varphi_j(\tau)}{\epsilon})}\right)_{j=1}^n$
   and 
 $c>0$ is independent of $\tau\in [0,1]$ and $\epsilon>0$.
  \end{theorem}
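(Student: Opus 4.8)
\emph{Overview and reduction.} The plan is to reduce Theorem~\ref{nlevel:thm:res} to a quantitative averaging statement by passing to the time‑dependent frame defined by $V_\epsilon$, and then to prove that estimate by an iterative near‑identity reduction which keeps track of the two very different time scales present, namely $1/\epsilon^{\alpha+1}$ (rotating wave) and $1/\epsilon$ (adiabatic). Since $V_\epsilon(\tau)$ is unitary, writing $\psi_\epsilon(\tau/\epsilon^{\alpha+1})=V_\epsilon(\tau)\chi_\epsilon(\tau)$ the theorem is equivalent to $\|\chi_\epsilon(\tau)-\hat\Psi_\epsilon(\tau/\epsilon)\|<c\,\epsilon^{\min(1/(\kappa+1),\alpha-1)}$ for $\tau\in[0,1]$, with $\chi_\epsilon(\tau)=V_\epsilon(\tau)^{-1}\psi_\epsilon(\tau/\epsilon^{\alpha+1})$.

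\emph{Step 1: rotating frame and RWA splitting.} First I would compute the equation satisfied by $\chi_\epsilon$ in the slow time $\tau=\epsilon^{\alpha+1}t\in[0,1]$. Because $H_0$ and $V_\epsilon(\tau)$ are diagonal, the drift term cancels the $E_j$‑part of $\dot V_\epsilon$, leaving the term $-\tfrac1\epsilon\,\mathrm{diag}(\varphi_j'(\tau))$, which is exactly the $\sum_j(-\varphi_j')e_{jj}$ contribution to $h_{\rm d}$ in \eqref{H-decoupled}. The control term $u_\epsilon\,V_\epsilon^{-1}H_1V_\epsilon$ I would expand with the product‑to‑sum formula for the cosines: an entry $(j,k)$ of $V_\epsilon^{-1}H_1V_\epsilon$ carries the phase $e^{i(E_j-E_k)\tau/\epsilon^{\alpha+1}}e^{i(\varphi_j-\varphi_k)/\epsilon}$, and for $(j,k)\in\mathcal R_\sigma$ this phase is cancelled, by the very definition $\hat\varphi_\sigma=\varphi_j-\varphi_k$, by one of the two exponentials produced by the $\sigma$‑pulse — here is where \eqref{eq:nonres-phi} is used, precisely to make $\hat\varphi_\sigma$ well defined; applied to the pair $(k,j)$ it also forces $\varphi_j=\varphi_k$ whenever $E_j=E_k$ and $(H_1)_{jk}\neq0$, so the resonant terms have no leftover $e^{i(\varphi_j-\varphi_k)/\epsilon}$. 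Collecting the resonant contributions reconstructs $\tfrac1\epsilon\sum_\sigma v_\sigma(\tau)H_1^\sigma$, and everything else oscillates; the outcome is
\[
i\,\frac{d\chi_\epsilon}{d\tau}=\frac1\epsilon\bigl(h_{\rm d}(\tau)+r_\epsilon(\tau)\bigr)\chi_\epsilon,\qquad \chi_\epsilon(0)=\psi_0,\qquad \tau\in[0,1],
\]
where $r_\epsilon(\tau)=\sum_\ell c_\ell(\tau)\,e^{i\omega_\ell\tau/\epsilon^{\alpha+1}}M_\ell$ with $c_\ell$ smooth, $M_\ell$ constant, and the $\omega_\ell$ in a finite set of \emph{nonzero} reals (sums and differences of elements of $\Xi$, doubled gaps, etc.). Since $\hat\Psi_\epsilon(\tau/\epsilon)$ solves the same equation with $r_\epsilon\equiv0$ and the same initial datum, the whole content of the theorem is now that the fast perturbation $\tfrac1\epsilon r_\epsilon$ may be averaged away at the stated cost.

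\emph{Step 2: averaging out $r_\epsilon$.} The field $\tfrac1\epsilon r_\epsilon$ is \emph{unbounded} (amplitude $\sim1/\epsilon$), so classical averaging does not apply; this is the crux, and I would follow the line of \cite{Kurzweil88,Liu97} adapted to $U(n)$. Because the frequencies of $r_\epsilon$ are $\ge c/\epsilon^{\alpha+1}$, its primitive $R_\epsilon$ obeys $\|R_\epsilon\|=O(\epsilon^{\alpha+1})$; a near‑identity unitary change of variables built from $R_\epsilon$ removes $\tfrac1\epsilon r_\epsilon$ at the price of generating $\tfrac1{\epsilon^2}[R_\epsilon,\,h_{\rm d}+r_\epsilon]=O(\epsilon^{\alpha-1})$. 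The summand $\tfrac1{\epsilon^2}[R_\epsilon,h_{\rm d}]$ is again fast oscillating and is pushed to a negligible order by one further iteration, while $\tfrac1{\epsilon^2}[R_\epsilon,r_\epsilon]$ carries the only non‑oscillating (``secular'') term, of size $O(\epsilon^{\alpha-1})$ — this is exactly where $\alpha>1$ is used. To make these estimates effective one must control the interference of $r_\epsilon$ with the flow of $\tfrac1\epsilon h_{\rm d}$: I would work in a smooth eigenframe $\Phi(\tau)$ with smooth eigenvalues $\lambda_j(\tau)$ of $h_{\rm d}$ — available by \cite[Theorem~7.6]{AKML1998} precisely because the $\kappa$‑th order gap condition bounds the order of contact of the $\lambda_j$ by $\kappa$ — and factor out the dynamical phases $e^{\frac i\epsilon\int\lambda_j}$. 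The residual oscillatory integrals inherited from $r_\epsilon$ have phase derivative $\tfrac1\epsilon(\omega_\ell/\epsilon^{\alpha}+(\lambda_j-\lambda_k))$, which is non‑stationary for $\epsilon$ small since $\omega_\ell\neq0$, so integration by parts gives $O(\epsilon^{\alpha})$; the estimates interfacing with the adiabatic flow of $\tfrac1\epsilon h_{\rm d}$, whose non‑adiabatic transitions have phases $\tfrac1\epsilon\int(\lambda_j-\lambda_k)$ that may be stationary at finitely many points of order $\le\kappa$ by the gap condition, are controlled by van der Corput's lemma and contribute $O(\epsilon^{1/(\kappa+1)})$. Summing these yields $\|\chi_\epsilon(\tau)-\hat\Psi_\epsilon(\tau/\epsilon)\|=O(\epsilon^{\min(1/(\kappa+1),\alpha-1)})$ uniformly in $\tau\in[0,1]$, and conjugating back by the unitary $V_\epsilon(\tau)$ finishes the proof.

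\emph{Main obstacle.} The hard part is Step 2: one is averaging an \emph{unbounded} oscillating field against an \emph{unbounded} drift $\tfrac1\epsilon h_{\rm d}$, and the relevant commutators do not vanish, so neither a first‑order average nor a direct Gronwall estimate suffices; one genuinely needs the second‑order, Kurzweil–Liu‑type argument on $U(n)$, together with van der Corput estimates for the oscillatory integrals. The structural feature that makes it go through is the scale separation $1/\epsilon^{\alpha+1}\gg1/\epsilon$ forced by $\alpha>1$: it prevents resonances between the ``RWA'' oscillations and the ``adiabatic'' ones, so the two error sources $\epsilon^{\alpha-1}$ and $\epsilon^{1/(\kappa+1)}$ decouple and only the worse of the two survives. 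A secondary, but necessary, technical point is the bookkeeping in Step 1 guaranteeing that the resonant part of the conjugated control is \emph{exactly} $h_{\rm d}$ and that $r_\epsilon$ has no component oscillating more slowly than $1/\epsilon^{\alpha+1}$.
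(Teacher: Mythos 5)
Your proposal is correct and its architecture is essentially that of the paper: conjugation by $V_\epsilon$ producing $\frac1\epsilon(h_{\rm d}+\text{oscillating remainder})$ (the paper's $\frac1\epsilon(A+B_\epsilon)$ with $(B_\epsilon)_{\epsilon>0}$ a sum of elements of $S(\alpha)$), elimination of the remainder via the $O(\epsilon^{\alpha+1})$ bound on its primitive, and averaging of the non-adiabatic coupling in a smooth eigenframe with a van der Corput--type estimate giving the $O(\epsilon^{1/(\kappa+1)})$ contribution under the $\kappa$-th order gap condition — these are precisely Lemma~\ref{conjug}, Proposition~\ref{avnb} and Proposition~\ref{RWAveraging}. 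The one genuine organizational difference is the order of the two conjugations: you remove the oscillating term first, in the original frame and against the unbounded drift $\frac1\epsilon h_{\rm d}$, by exponentiating the primitive ($e^{\pm\frac{1}{\epsilon}R_\epsilon}$ with $\frac1\epsilon R_\epsilon=O(\epsilon^{\alpha})$), whereas the paper first conjugates by the eigenframe and the dynamical phases so that the residual drift $D_\epsilon$ is bounded, and only then factorizes the flow by the variations formula. Your order is viable precisely because you use the exponential of the primitive rather than the flow of the perturbation: the latter is only $\mathrm{Id}+O(\epsilon^{\alpha-1})$ (the second iterated integral dominates), which would be useless when conjugating an $O(1/\epsilon)$ drift, while the commutator of an $O(\epsilon^{\alpha})$ generator with $\frac1\epsilon h_{\rm d}$ is $O(\epsilon^{\alpha-1})$ as you claim. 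Two points need tightening. First, the coefficients of your $r_\epsilon$ are not $\epsilon$-independent smooth functions: each term carries a slow phase $e^{i g_\ell(\tau)/\epsilon}$ inherited from the $\varphi_j/\epsilon$, so a single integration by parts loses a factor $\epsilon$ in the remainder and must be iterated of the order of $\lceil 1/\alpha\rceil$ times to reach $O(\epsilon^{\alpha+1})$ — exactly the iteration carried out in the proof of Lemma~\ref{conjug}; your sketch omits this. Second, your account of where $\epsilon^{1/(\kappa+1)}$ enters is slightly off: since $\hat\Psi_\epsilon$ is the \emph{exact} flow of $h_{\rm d}(\epsilon\,\cdot)$, there is no ``interference with the non-adiabatic transitions'' to control per se; that term appears because one compares both the perturbed and the unperturbed flows to the explicit adiabatic approximant $\Upsilon_\epsilon$ (the paper's triangle-inequality step after Proposition~\ref{RWAveraging}), and your eigenframe/van der Corput step implicitly does the same. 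Neither issue invalidates the argument; both are resolved exactly as in the paper.
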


The proof is postponed to the next section, where the required preliminary technical results are 
obtained.

\section{Approximation results and proof of Theorem~\ref{nlevel:thm:res}}\label{sec:proof}

In this section, we prove several averaging results leading to the proof of Theorem~\ref{nlevel:thm:res}.
Up to a suitable time-rescaling and change of coordinates, the proof is based on 
an adiabatic approximation where  high-order oscillating terms
are proved to be negligible.  

\subsection{Negligible high-order terms in adiabatic approximation}
Consider $A\in C^\infty([0,1],\mathfrak{u}(n))$.
The 
adiabatic evolution corresponding to $A$ is described by the equation
\begin{equation}\label{eq;adi}
\frac{d\hat{X}_{\epsilon}(s)}{ds}= A(\epsilon s)\hat{X}_{\epsilon}(s),\quad 0\le s\le \frac{1}{\epsilon}.
\end{equation}
Our goal is to understand under which conditions on a perturbation term  $(B_{\epsilon}(\cdot))_{\epsilon>0}$
 the flows of 
\eqref{eq;adi} and  
\begin{equation}\label{eq;adi-eps}
\frac{dX_{\epsilon}(s)}{ds}= \left(A(\epsilon s)+B_{\epsilon}(\epsilon s)\right)X_{\epsilon}(s),\quad 0\le s\le \frac{1}{\epsilon},
\end{equation}
 are arbitrarily close, as $\epsilon\to 0$.

\begin{definition}\label{B}
Given $\alpha>1$, denote by $S(\alpha)$  the set of families $(B_{\epsilon})_{\epsilon>0}$ of functions in $C^{\infty}([0,1],\mathfrak{u}(n))$ such that 
\begin{itemize}
\item for every $j\in \{1,\dots,n\}$,  there exist $\beta_{jj}\in\R\setminus\{0\}$ and 
$v_{jj},h_{jj}\in C^{\infty}([0,1],\R)$ such that $(B_{\epsilon}(\tau))_{jj}=
i 
v_{jj}(\tau)\cos(\frac{\beta_{jj}\tau}{\epsilon^{\alpha+1}}+\frac{h_{jj}(\tau)}{\epsilon})$ and every $\tau\in [0,1]$,
\item 
for every $1\le j<k\le n$ there exist $\beta_{jk}\in\R\setminus\{0\}$ and 
$v_{jk},h_{jk}\in C^{\infty}([0,1],\R)$ such that $(B_{\epsilon}(\tau))_{jk}=
i
v_{jk}(\tau)e^{i(\frac{\beta_{jk}\tau}{\epsilon^{\alpha+1}}+\frac{h_{jk}(\tau)}{\epsilon})}$ for every $\tau\in [0,1]$.
\end{itemize} 
\end{definition}

\begin{theorem}\label{Drive}
Consider $A\in C^\infty([0,1],\mathfrak{u}(n))$ and a finite sum $(B_{\epsilon})_{\epsilon>0}$  of elements belonging to $S(\alpha)$ with $\alpha>1$. Assume that $A(\cdot)$ satisfies 
a $\kappa$-th order gap condition for some nonnegative integer $\kappa$.
Fix $X_0\in \C^n$. 
Let  $\hat{X}_{\epsilon}$ and  $X_{\epsilon}$ be  the solutions of, respectively, 
 \eqref{eq;adi} and \eqref{eq;adi-eps} with 
$\hat{X}_{\epsilon}(0)=X_{0}$ and 
$X_{\epsilon}(0)=X_{0}$.
Then there exists $c>0$ 
such that  
$ \|\hat{X}_{\epsilon}(s)-X_{\epsilon}(s)\|\leq c \epsilon^{\min(\frac{1}{\kappa+1},\alpha-1)}$
for every $s \in [0,1/\epsilon]$ and $\epsilon>0$.
\end{theorem}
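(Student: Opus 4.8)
The plan is to reduce the problem to a quantitative adiabatic estimate for a slowly-varying Hamiltonian, after absorbing the fast oscillations into a unitary gauge transformation and showing that the resulting residual perturbation is small in an averaged (Riemann--Lebesgue) sense. First I would pass to the slow time $\tau=\epsilon s\in[0,1]$, so that \eqref{eq;adi} and \eqref{eq;adi-eps} become $\frac1\epsilon$-speed equations $\epsilon\,\dot Y=A(\tau)Y$ and $\epsilon\,\dot Z=(A(\tau)+B_\epsilon(\tau))Z$; the goal is then to bound $\|Y(\tau)-Z(\tau)\|$ uniformly on $[0,1]$. The key observation is that each scalar entry of $B_\epsilon$ oscillates with instantaneous frequency $\beta_{jk}/\epsilon^{\alpha+1}$ in the original time $s$, i.e. $\beta_{jk}/\epsilon^{\alpha}$ in the slow time $\tau$, which is much faster than the $1/\epsilon$ speed of the adiabatic dynamics because $\alpha>1$. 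I would introduce the explicit diagonal unitary $D_\epsilon(\tau)=\mathrm{diag}\big(e^{i\rho_{j}(\tau)/\epsilon}\big)$, or rather a conjugation by the fast phases, chosen so that in the interaction picture $W=D_\epsilon^{*}Z$ the perturbation becomes $\tilde B_\epsilon(\tau)=D_\epsilon^{*}(A+B_\epsilon)D_\epsilon-A$ together with the commutator terms $D_\epsilon^{*}\dot D_\epsilon$; the point is that, because the entries of $B_\epsilon$ are purely oscillatory with nonzero frequencies $\beta_{jk}$, an integration by parts yields $\big\|\int_0^\tau e^{i\beta_{jk}t/\epsilon^{\alpha+1}}g(t)\,dt\big\|=O(\epsilon^{\alpha+1})$ for smooth $g$. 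Hence, applying a Duhamel/variation-of-constants expansion for $Z-Y$ and integrating the oscillatory factors by parts once, the contribution of $B_\epsilon$ to $\|Z(\tau)-Y(\tau)\|$ is controlled by $\epsilon^{\alpha+1}/\epsilon=\epsilon^{\alpha}$ times bounded factors (the extra $1/\epsilon$ coming from the propagator's speed and from the Grönwall constant), which after the rescaling back to $[0,1/\epsilon]$ gives an $O(\epsilon^{\alpha-1})$ error — matching one of the two terms in the claimed bound.

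Next I would handle the genuine adiabatic part. After removing the fast oscillations, what remains is exactly an adiabatic evolution governed by the smooth Hamiltonian $A(\tau)$, which by hypothesis satisfies a $\kappa$-th order gap condition; the standard quantitative adiabatic theorem in the presence of finite-order eigenvalue crossings (in the spirit of the references cited in the paper, e.g. the Kurzweil--Liu type estimates and the higher-order gap refinements) yields that transitions between the instantaneous eigenspaces are of size $O(\epsilon^{1/(\kappa+1)})$. I would quote or re-derive this: diagonalize $A(\tau)=P(\tau)\Lambda(\tau)P^{*}(\tau)$ with $P,\Lambda\in C^\infty$ (legitimate by the cited \cite{AKML1998} regularity result under finite order of contact), pass to the frame $\tilde W=P^{*}W$, where the equation acquires the adiabatic coupling term $-P^{*}\dot P$; the off-diagonal block of this term is divided, via integration by parts against the fast adiabatic phase $e^{i\int(\lambda_\ell-\lambda_j)/\epsilon}$, by the gap $\lambda_\ell-\lambda_j$, and near the finitely many points where this gap vanishes to order at most $\kappa$ one splits the time interval into a shrinking neighborhood of size $\epsilon^{1/(\kappa+1)}$ (handled by the crude bound) and its complement (handled by the integration-by-parts estimate with the gap bounded below by $c\,\mathrm{dist}^\kappa$). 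Balancing the two contributions optimizes at $\epsilon^{1/(\kappa+1)}$.

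Finally I would assemble the two estimates: writing $Z-Y=(Z-\text{(de-oscillated approximation)})+(\text{adiabatic error})$ and using the triangle inequality together with a Grönwall argument to propagate the errors through the unitary (hence norm-preserving) flows, the total bound is $O\big(\epsilon^{\alpha-1}\big)+O\big(\epsilon^{1/(\kappa+1)}\big)=O\big(\epsilon^{\min(1/(\kappa+1),\,\alpha-1)}\big)$, which is the assertion. The main obstacle I anticipate is the careful bookkeeping in the de-oscillation step when $B_\epsilon$ is a \emph{finite sum} of elements of $S(\alpha)$ with possibly different frequencies $\beta_{jk}$: one must ensure that no near-resonances between the various $\beta_{jk}$ (or between $\beta_{jk}/\epsilon^{\alpha+1}$ and the much slower adiabatic phases $\int(\lambda_\ell-\lambda_j)/\epsilon$) spoil the integration-by-parts gain — this is where the hypothesis $\beta_{jk}\neq 0$ and $\alpha>1$ are essential, since they guarantee a uniform lower bound on the effective oscillation frequency after rescaling, so the oscillatory integrals genuinely contribute the advertised $\epsilon^{\alpha+1}$ factor. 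A secondary technical point is making the constants uniform in $\epsilon$ and $s$, which requires that all the smooth data ($A$, the $v_{jk}$, $h_{jk}$, and the spectral projections of $A$) have bounds depending only on a finite number of derivatives on the compact interval $[0,1]$.
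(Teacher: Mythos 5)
Your plan follows the same route as the paper: rescale to the slow time, conjugate into the adiabatic frame $P(\tau)e^{-i\Gamma(\tau)/\epsilon}$, show that the conjugated oscillating perturbation is negligible via an averaging/oscillatory-integral argument (this is the paper's Proposition~\ref{avnb} combined with Lemma~\ref{conjug}), treat the remaining coupling $P^{*}\frac{dP}{d\tau}$ by the $\kappa$-th order gap averaging estimate giving the $O(\epsilon^{1/(\kappa+1)})$ term, and conclude by the triangle inequality since the leading flow is independent of $B_\epsilon$. You also correctly identify the two essential difficulties: the perturbation is unbounded (amplitude $O(1/\epsilon)$ in the slow time), so a first-order Duhamel bound is not enough and one must control the iterated integral $\int|M|\,|\int M|$ as well; and the constants must be uniform on $[0,1]$.

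There is, however, one step that fails as stated. You claim that a \emph{single} integration by parts gives $\bigl\|\int_0^\tau e^{i\beta t/\epsilon^{\alpha+1}}g(t)\,dt\bigr\|=O(\epsilon^{\alpha+1})$. Here $g$ is not an $\epsilon$-independent smooth amplitude: after the conjugation it carries the slower phases $e^{ih(\vartheta)/\epsilon}$ and $e^{i(\Gamma_\ell(\vartheta)-\Gamma_m(\vartheta))/\epsilon}$, whose derivatives are $O(1/\epsilon)$. One integration by parts therefore produces a boundary term of size $O(\epsilon^{\alpha+1})$ plus an integral term which is only $\epsilon^{\alpha}$ times another integral of the same type, i.e.\ a net gain of $\epsilon^{\alpha}$ per step rather than $\epsilon^{\alpha+1}$ outright. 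The paper fixes this by iterating the integration by parts $\lceil 1/\alpha\rceil$ additional times (see the proof of Lemma~\ref{conjug}); without the iteration your estimate only yields $O(\epsilon^{\alpha})$ for the primitive and the subsequent bookkeeping covers merely $\alpha\ge 2$, not all $\alpha>1$. Relatedly, the role of the hypothesis $\alpha>1$ is not to separate the frequencies (any $\alpha>0$ does that); it is needed because the second-order Duhamel term is of size (primitive of the perturbation) $\times$ (amplitude of the perturbation) $= O(\epsilon^{\alpha})\cdot O(1/\epsilon)=O(\epsilon^{\alpha-1})$, which is small only for $\alpha>1$. With the iterated integration by parts inserted and this accounting made precise, your plan coincides with the paper's proof.
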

 Before proving Theorem~\ref{Drive}, let us show how it can be used to deduce Theorem~\ref{nlevel:thm:res}.
 
 {\bf Proof of Theorem~\ref{nlevel:thm:res}.} 
Let us introduce the notation $\hat \Xi$ for $\Xi\cup(-\Xi)$, and, for $\sigma\in \hat \Xi$, let $\hat H_1^\sigma$ be the matrix such that $(\hat H_1^\sigma)_{j,k}=(H_1)_{j,k}$ if $E_j-E_k=\sigma$ and $0$ otherwise.  
Moreover, for $\sigma\in \hat \Xi$ such that $v_{|\sigma|}\ne 0$
and $j,k$ such that $E_j-E_k=\sigma$ and $(H_1)_{j,k}\ne 0$, set
\[\chi_{\sigma}(\tau)=\frac{\sigma\tau}{\epsilon^{\alpha+1}}+\frac{\varphi_j(\tau)-\varphi_k(\tau)}{\epsilon},\qquad \sigma\in \hat \Xi,\]
and notice that $\chi_{-\sigma}(\tau)=-\chi_{\sigma}(\tau)$.

Define $\Psi_{\epsilon}(\tau)=V^*_{\epsilon}(\tau)\psi_{\epsilon}(\tau/\epsilon^{\alpha+1})$, $\tau\in [0,1]$, and notice that $\Psi_{\epsilon}(0)=\psi_0$, since $\varphi(0)=0$.
Since diagonal matrices commute, we easily get that $\Psi_{\epsilon}$ satisfies 
\begin{equation}
i\frac{d\Psi_{\epsilon}(\tau)}{d\tau}=-\frac1\epsilon\mathrm{diag}(\varphi'(\tau))\Psi_{\epsilon}(\tau)+
C_{\epsilon}(\tau)
\Psi_{\epsilon}(\tau),
\label{toberearranged}
\end{equation}
where for $\tau\in [0,1]$,
\begin{align*}C_{\epsilon}(\tau)=&\frac{u_\epsilon(\tau/\epsilon^{\alpha+1})}{\epsilon^{\alpha+1}}
V^*_{\epsilon}(\tau)H_1 V_{\epsilon}(\tau)\\
=&\frac{u_\epsilon(\tau/\epsilon^{\alpha+1})}{\epsilon^{\alpha+1}}\sum_{\sigma\in \hat{\Xi}} 
e^{i\chi_\sigma(\tau)} \hat H_1^\sigma.
\end{align*}

Notice now that
\begin{align*}
\frac{u_\epsilon(\tau/\epsilon^{\alpha+1})}{\epsilon^{\alpha+1}}&
=\frac{v_0(\tau)}{\epsilon}+\frac1\epsilon\sum_{\sigma\in \Xi\setminus\{0\}}
 v_{\sigma}(\tau)
 (e^{i \chi_{\sigma}(\tau)}+e^{-i \chi_{\sigma}(\tau)}).
\end{align*}
In order to rewrite system \eqref{toberearranged} in the form 
$ \frac{d\Psi_{\epsilon}(\tau)}{d\tau}=\frac{1}{\epsilon}\left(A(\tau)+B_{\epsilon}(\tau)\right) \Psi_{\epsilon}(\tau)$,
define
\[\begin{aligned}A(\tau)={}&
  i \mathrm{diag}(\varphi'(\tau)) \\
 &-i\left(v_0(\tau)\hat H_1^0+\sum_{\sigma\in \Xi\setminus\{0\}}v_\sigma(\tau)(\hat H_1^\sigma+\hat H_1^{-\sigma})\right)\\
 ={}&i \mathrm{diag}(\varphi'(\tau))+\sum_{\sigma\in \Xi}v_\sigma(\tau) H_1^\sigma
=
-i h_{\rm d}(\tau),
 \end{aligned}
\] 
and 
 \[\begin{aligned} B_\epsilon(\tau)=&-i v_0(\tau)\sum_{\sigma\in \hat \Xi\setminus\{0\}}e^{i\chi_\sigma(\tau)} \hat H_1^\sigma\\
 &-i\sum_{\sigma\in \Xi\setminus\{0\}}v_{\sigma}(\tau)(e^{2i\chi_\sigma(\tau)} \hat H_1^\sigma+e^{-2i\chi_{\sigma}(\tau)}\hat H_1^{-\sigma})\\
 & -i\sum_{\sigma\in \Xi\setminus\{0\}}v_{\sigma}(\tau)\sum_{\hat \sigma\in \Xi\setminus\{\pm \sigma\}}\Big(e^{i(\chi_{\hat\sigma}(\tau)+\chi_\sigma(\tau))} \hat H_1^{\hat \sigma}\\
 &+e^{i(-\chi_{\hat\sigma}(\tau)+\chi_\sigma(\tau))}\hat H_1^{-\hat\sigma}\Big).
\end{aligned}\]
One easily checks 
that $(B_{\epsilon})_{\epsilon>0}$ is a finite sum of elements of $S(\alpha)$.
 By applying Theorem~\ref{Drive}, we get that   
 $\|\Psi_{\epsilon}(\tau) -\hat{\Psi}_{\epsilon}(\tau)\|<c \epsilon^{\min(\frac{1}{\kappa+1},\alpha-1)}$ for every $\tau\in [0,1]$
 for some 
$c>0$ independent of $\tau\in [0,1]$ and $\epsilon>0$.
 %

 \subsection{Proof of Theorem~\ref{Drive}}\label{sec:Drive}

The results in this section have been presented, in a preliminary version, in \cite{CDC-nicolas}.
 The proof of Theorem~\ref{Drive} is split in three steps. 
In the first of such steps, 
we consider an oscillating (possibly unbounded) perturbation term 
on a bounded time-interval and we give 
a condition  ensuring its negligibility 
in terms of the asymptotic behavior of its iterated integrals. 

\begin{proposition}\label{avnb}
Let $D$ and $(M_{\epsilon})_{\epsilon>0}$ be in $C^\infty([0,1],\mathfrak{u}(n))$.
Assume that
$\int_0^\tau M_{\epsilon}(\vartheta)d\vartheta=O(\epsilon)$
and that there exists  $\eta>0$ such that
\[\int_0^\tau \left|M_{\epsilon}(\vartheta)\right|\left|\int_0^\vartheta M_{\epsilon}(\theta)d\theta\right| 
d\vartheta=O(\epsilon^\eta),\] 
both estimates being uniform with respect to $\tau\in [0,1]$. 
Denote the flow of the equation $\frac{dx(\tau)}{d\tau}=D(\tau)x(\tau)$ 
by $P_{\tau}\in {\rm U}(n)$ and the flow of the equation $\frac{dx(\tau)}{d\tau}=(D(\tau)+M_{\epsilon}(\tau))
x(\tau)$ 
by $P_{\tau}^{\epsilon}\in {\rm U}(n)$.
Then 
$P_{\tau}^{\epsilon}=P_{\tau}+O(\epsilon^{\min(\eta,1)})$,
uniformly with respect to $\tau\in  [0,1]$.
\end{proposition}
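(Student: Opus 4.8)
The plan is to pass to the interaction picture with respect to the unperturbed flow $P_\tau$ and then treat the resulting perturbation as a total derivative up to a quadratically small remainder. Concretely, write $P_\tau^\epsilon = P_\tau Q_\tau^\epsilon$, where $Q_\tau^\epsilon \in \mathrm{U}(n)$ solves $\frac{dQ_\tau^\epsilon}{d\tau} = \tilde M_\epsilon(\tau) Q_\tau^\epsilon$ with $\tilde M_\epsilon(\tau) = P_\tau^{-1} M_\epsilon(\tau) P_\tau$ and $Q_0^\epsilon = I$. Since the estimates in the hypothesis are invariant (up to constants) under conjugation by the fixed smooth unitary path $P_\tau$, we still have $\int_0^\tau \tilde M_\epsilon(\vartheta)\,d\vartheta = O(\epsilon)$ and $\int_0^\tau |\tilde M_\epsilon(\vartheta)|\,\bigl|\int_0^\vartheta \tilde M_\epsilon(\theta)\,d\theta\bigr|\,d\vartheta = O(\epsilon^\eta)$, uniformly in $\tau$. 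It therefore suffices to show $Q_\tau^\epsilon = I + O(\epsilon^{\min(\eta,1)})$ uniformly on $[0,1]$.

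For that, I would integrate the equation for $Q_\tau^\epsilon$ and integrate by parts on the oscillating factor. Writing $N_\epsilon(\tau) = \int_0^\tau \tilde M_\epsilon(\vartheta)\,d\vartheta = O(\epsilon)$, we get
\begin{equation*}
Q_\tau^\epsilon - I = \int_0^\tau \tilde M_\epsilon(\vartheta) Q_\vartheta^\epsilon\,d\vartheta = N_\epsilon(\tau) Q_\tau^\epsilon - \int_0^\tau N_\epsilon(\vartheta)\,\frac{dQ_\vartheta^\epsilon}{d\vartheta}\,d\vartheta,
\end{equation*}
and substituting $\frac{dQ_\vartheta^\epsilon}{d\vartheta} = \tilde M_\epsilon(\vartheta) Q_\vartheta^\epsilon$ this becomes
\begin{equation*}
Q_\tau^\epsilon - I = N_\epsilon(\tau) Q_\tau^\epsilon - \int_0^\tau N_\epsilon(\vartheta)\tilde M_\epsilon(\vartheta) Q_\vartheta^\epsilon\,d\vartheta.
\end{equation*}
Now $\|N_\epsilon(\tau) Q_\tau^\epsilon\| = O(\epsilon)$ since $Q_\tau^\epsilon$ is unitary, and the integral is bounded in norm by $\int_0^\tau |N_\epsilon(\vartheta)|\,|\tilde M_\epsilon(\vartheta)|\,d\vartheta \cdot \sup\|Q^\epsilon\| = O(\epsilon^\eta)$ by the second hypothesis (again using unitarity of $Q_\vartheta^\epsilon$). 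Hence $\|Q_\tau^\epsilon - I\| = O(\epsilon) + O(\epsilon^\eta) = O(\epsilon^{\min(\eta,1)})$ uniformly in $\tau$, which gives the claim after multiplying back by $P_\tau$ (a unitary, hence norm-preserving) and using that the $O$-constants are $\tau$-uniform.

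The only mildly delicate points are bookkeeping ones rather than genuine obstacles: one must check that conjugation by $P_\tau$ really preserves both asymptotic estimates with uniform constants — this uses that $P_\tau$ and $P_\tau^{-1}$ are bounded with bounded derivative on the compact interval $[0,1]$, so that $\int_0^\vartheta \tilde M_\epsilon = P_\vartheta^{-1}\bigl(\int_0^\vartheta M_\epsilon\bigr) P_\vartheta + \int_0^\vartheta [\text{commutator terms with } \frac{d}{d\theta}P_\theta^{-1}]\,d\theta$, and each correction term is again $O(\epsilon)$ by the first hypothesis; and similarly for the second estimate. One must also take care that the integration by parts is legitimate, which it is since all objects are smooth in $\vartheta$ for fixed $\epsilon$. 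I expect the main subtlety to be precisely verifying this conjugation-invariance of the hypotheses with $\tau$-uniform constants; everything else is the standard "variation of constants plus one integration by parts" argument that underlies quantitative averaging.
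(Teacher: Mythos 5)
Your overall architecture (variation of constants plus one integration by parts, using that the flow stays in ${\rm U}(n)$) is the right one, and it is essentially the paper's, but you factor the flow in the opposite order, and that is exactly where a genuine gap appears. You set $P^\epsilon_\tau=P_\tau Q^\epsilon_\tau$ with $Q^\epsilon$ driven by the \emph{conjugated} perturbation $\tilde M_\epsilon=P^{*}M_\epsilon P$, and you then need the two hypotheses to hold for $\tilde M_\epsilon$. The first one transfers (your integration by parts is fine), but the second one does not in general. Indeed, $\int_0^\vartheta\tilde M_\epsilon(\theta)\,d\theta=P_\vartheta^{*}N_\epsilon(\vartheta)P_\vartheta+R_\epsilon(\vartheta)$ with $N_\epsilon(\vartheta)=\int_0^\vartheta M_\epsilon$, where the commutator remainder satisfies only the \emph{sup-norm} bound $R_\epsilon(\vartheta)=O(\sup_\theta|N_\epsilon(\theta)|)=O(\epsilon)$; it is not controlled pointwise by $|N_\epsilon(\vartheta)|$. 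Plugging this into the second integral gives
\[
\int_0^\tau \bigl|\tilde M_\epsilon(\vartheta)\bigr|\Bigl|\int_0^\vartheta\tilde M_\epsilon\Bigr|\,d\vartheta
\le C\int_0^\tau|M_\epsilon||N_\epsilon|\,d\vartheta+C\epsilon\int_0^\tau|M_\epsilon(\vartheta)|\,d\vartheta ,
\]
and the hypotheses place no bound at all on $\int_0^\tau|M_\epsilon|$. In the regime for which this proposition is designed (Lemma~\ref{conjug}, where $M_\epsilon=O(1/\epsilon)$) the second term is only $O(1)$, so your claimed ``conjugation-invariance of the hypotheses'' — which you yourself identify as the delicate point and then dismiss with ``similarly for the second estimate'' — is precisely what fails.

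The gap is repairable in two ways. One is to keep your factorization but perform the integration by parts on $\int_0^\tau\tilde M_\epsilon Q^\epsilon$ using the \emph{unconjugated} primitive, i.e.\ write $\tilde M_\epsilon=P^{*}N_\epsilon'P$ and integrate by parts; then every error term carries a factor $N_\epsilon$ next to $M_\epsilon$ inside the integral, and is controlled by $\int|N_\epsilon||M_\epsilon|=O(\epsilon^\eta)$ or by $\sup|N_\epsilon|=O(\epsilon)$, never by $\epsilon\int|M_\epsilon|$. The other is the paper's route: factor in the opposite order, $P^\epsilon_\tau=Q^\epsilon_\tau W^\epsilon_\tau$, where $Q^\epsilon$ is the flow of the \emph{raw} $M_\epsilon$ — so the hypotheses apply verbatim and the integration by parts gives $Q^\epsilon_\tau=\mathrm{Id}+O(\epsilon^{\min(\eta,1)})$ directly — and $W^\epsilon$ is the flow of $(Q^\epsilon_\tau)^{-1}D(\tau)Q^\epsilon_\tau=D(\tau)+O(\epsilon^{\min(\eta,1)})$; since $D$ is bounded, Gronwall concludes. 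The moral is that the near-identity conjugation should fall on the bounded field $D$, not the unbounded field $M_\epsilon$ on the smooth path $P_\tau$.
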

\begin{proof}
Let 
$K>0$ be such that 
 $|\int_0^\tau M_\epsilon(\vartheta) d\vartheta|<K \epsilon$ for every $\tau\in [0,1]$ and denote by $Q_{\tau}^{\epsilon}$ the flow associated with $M_{\epsilon}$.
Hence, $Q_{\tau}^{\epsilon}=\mathrm{Id}+\int_0^\tau M_{\epsilon}(\vartheta)Q_{\vartheta}^{\epsilon} d\vartheta$.
By integration by parts, 
$Q_{\tau}^{\epsilon}=
\mathrm{Id}+\left(\int_0^\tau M_{\epsilon}(\vartheta) d\vartheta\right) Q_{\tau}^{\epsilon} 
- \int_0^\tau \left(\int_0^\vartheta M_{\epsilon}(\theta) d\theta\right)M_{\epsilon}(\vartheta)Q_{\vartheta}^{\epsilon} d\vartheta$.
Moreover,
$Q_{\tau}^{\epsilon}$ is bounded uniformly with respect to $(\tau,\epsilon)$, since it evolves in ${\rm U}(n)$.
Hence,
$\left|Q_{\tau}^{\epsilon}-\mathrm{Id}\right|
\leq  
C_1 \epsilon + C_2 \epsilon^\eta$,
where $C_1,C_2$ are positive constants which do not depend on $(\tau,\epsilon)$. 
We deduce that
$Q_{\tau}^{\epsilon}=\mathrm{Id}+O(\epsilon^{\min(\eta,1)})$, uniformly with respect to $\tau \in [0,1]$. 
By the variations formula (see, e.g., \cite[Section~2.7]{Agra}), $P_{\tau}^{\epsilon}=Q_{\tau}^{\epsilon}W_{\tau}^{\epsilon}$, where $W_{\tau}^{\epsilon}\in {\rm U}(n)$ is the flow of the equation $\frac{dx(\tau)}{d\tau}=(Q_{\tau}^{\epsilon})^{-1} D(\tau) Q_{\tau}^{\epsilon}x(\tau)$. 
By the previous estimate, we have $(Q_{\tau}^{\epsilon})^{-1} D(\tau) Q_{\tau}^{\epsilon}=D(\tau)+O(\epsilon^{\min(\eta,1)})$ uniformly with respect to $\tau \in [0,1]$.
By an easy application of Gronwall's Lemma, we get that $W_{\tau}^{\epsilon}=P_{\tau}+O(\epsilon^{\min(\eta,1)})$  and we can conclude.
\end{proof}

The second step of the proof of Theorem~\ref{Drive} consists in the following lemma, which 
will be used to apply Proposition~\ref{avnb} to a suitable reformulation of Equation~\eqref{eq;adi-eps}.

\begin{lemma}\label{conjug}
Let $\alpha>1$
 and $(B_\epsilon)_{\epsilon>0}$ be a finite sum of elements in $S(\alpha)$.
 Fix $P\in C^\infty([0,1],{\rm U}(n))$ and $\Gamma=\mathrm{diag}(\Gamma_j)_{j=1}^n$ with 
$\Gamma_j\in C^\infty([0,1],\R)$, $j=1,\dots,n$.
For every $\epsilon>0$ and $\tau\in [0,1]$, 
define 
\begin{equation}
M(P,\Gamma,\epsilon)(\tau)=e^{i\frac{\Gamma(\tau)}{\epsilon}}P^{*}(\tau)B_{\epsilon}(\tau)P(\tau)e^{-i\frac{\Gamma(\tau)}{\epsilon}}.\label{def:MP}
\end{equation}
Then 
\begin{equation}\label{eq:stbp}
\int_0^\tau M(P,\Gamma,\epsilon)(\vartheta) d\vartheta=O(\epsilon^{\alpha})
\end{equation}
 and 
\[\int_0^\tau \left|M(P,\Gamma,\epsilon)(\vartheta)\right|\left|\int_0^\vartheta M(P,\Gamma,\epsilon)(\theta)d\theta\right| 
d\vartheta=O(\epsilon^{\alpha-1}),\] 
both estimates being uniform with respect to $\tau\in [0,1]$.
\end{lemma}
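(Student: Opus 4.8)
The plan is to prove \eqref{eq:stbp} and the second estimate entry by entry, exploiting the explicit structure of the entries of $B_\epsilon$ coming from Definition~\ref{B}. First I would note that since $(B_\epsilon)_{\epsilon>0}$ is a finite sum of elements of $S(\alpha)$, by linearity of all the operations involved it suffices to treat the case where $B_\epsilon$ is a single element of $S(\alpha)$; then the $(j,k)$-entry of $B_\epsilon(\tau)$ is of the form $i\,v_{jk}(\tau) e^{i(\beta_{jk}\tau/\epsilon^{\alpha+1} + h_{jk}(\tau)/\epsilon)}$ with $\beta_{jk}\neq 0$ (the diagonal case being the real part of such an expression). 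The conjugation by $P^*(\tau)(\cdot)P(\tau)$ and multiplication by the diagonal unitaries $e^{\pm i\Gamma(\tau)/\epsilon}$ only produce finite linear combinations of terms of the shape $a_{\ell m}(\tau)\,e^{i(\beta_{jk}\tau/\epsilon^{\alpha+1} + g_{\ell m}(\tau)/\epsilon)}$, where $a_{\ell m}\in C^\infty([0,1],\C)$ and $g_{\ell m}(\tau)=h_{jk}(\tau)+\Gamma_\ell(\tau)-\Gamma_m(\tau)\in C^\infty([0,1],\R)$; crucially the fast frequency $\beta_{jk}/\epsilon^{\alpha+1}$ is untouched and nonzero. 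So the whole matter reduces to estimating scalar oscillatory integrals of the form $\int_0^\tau a(\vartheta) e^{i(\beta\vartheta/\epsilon^{\alpha+1}+g(\vartheta)/\epsilon)}\,d\vartheta$ with $\beta\neq 0$, $a,g$ smooth.

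Next I would carry out a single integration by parts on this scalar integral. Writing the phase as $\Phi_\epsilon(\vartheta)=\beta\vartheta/\epsilon^{\alpha+1}+g(\vartheta)/\epsilon$, we have $\Phi_\epsilon'(\vartheta)=\beta/\epsilon^{\alpha+1}+g'(\vartheta)/\epsilon$, which for $\epsilon$ small is bounded below in absolute value by $c/\epsilon^{\alpha+1}$ (here $\alpha>1$ guarantees the $\beta/\epsilon^{\alpha+1}$ term dominates $g'(\vartheta)/\epsilon$). Integration by parts, $\int_0^\tau a\, e^{i\Phi_\epsilon} = \int_0^\tau \frac{a}{i\Phi_\epsilon'}\frac{d}{d\vartheta}(e^{i\Phi_\epsilon})$, yields a boundary term of size $O(1/|\Phi_\epsilon'|)=O(\epsilon^{\alpha+1})$ and a remaining integral whose integrand $\frac{d}{d\vartheta}(a/(i\Phi_\epsilon'))$ is also $O(\epsilon^{\alpha})$ (differentiating $1/\Phi_\epsilon'$ costs at most a factor $\epsilon^{-(\alpha+1)}$ against a $\Phi_\epsilon''=g''/\epsilon$, giving $\epsilon^{\alpha}$; the term where the derivative hits $a$ gives $\epsilon^{\alpha+1}$). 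Hence $\int_0^\tau M(P,\Gamma,\epsilon)(\vartheta)\,d\vartheta = O(\epsilon^{\alpha})$ uniformly in $\tau\in[0,1]$, which is \eqref{eq:stbp}. For the second estimate, I would combine this with the trivial bound $|M(P,\Gamma,\epsilon)(\vartheta)|=O(1)$ (it is built from smooth functions and unitaries, with no negative powers of $\epsilon$): $\int_0^\tau |M(P,\Gamma,\epsilon)(\vartheta)|\,|\int_0^\vartheta M(P,\Gamma,\epsilon)(\theta)\,d\theta|\,d\vartheta \le \big(\sup_{[0,1]}|M(P,\Gamma,\epsilon)|\big)\cdot\big(\sup_{[0,1]}|\int_0^\cdot M(P,\Gamma,\epsilon)|\big) = O(1)\cdot O(\epsilon^{\alpha})=O(\epsilon^{\alpha})$, which is even stronger than the claimed $O(\epsilon^{\alpha-1})$; stating $O(\epsilon^{\alpha-1})$ is a harmless weakening that keeps the statement in line with how it is invoked later.

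The main obstacle — and the point requiring care — is the uniform nonstationarity of the phase: one must verify that $|\Phi_\epsilon'(\vartheta)|\ge c\,\epsilon^{-(\alpha+1)}$ uniformly in $\vartheta\in[0,1]$ and in $\epsilon$ small, which is exactly where the hypothesis $\alpha>1$ and $\beta_{jk}\ne 0$ enter (if $\alpha\le 1$ the slow phase $g'/\epsilon$ could cancel the fast one and create stationary points, destroying the estimate). A secondary bookkeeping point is to make sure that after conjugation by $P$ and $e^{\pm i\Gamma/\epsilon}$ no entry acquires a phase with a \emph{vanishing} total fast frequency: this holds precisely because the diagonal conjugators only shift $h_{jk}$ by the $O(1)$ (i.e.\ $\epsilon$-free) slow quantities $\Gamma_\ell-\Gamma_m$ and leave the $\beta_{jk}/\epsilon^{\alpha+1}$ part alone, while $P^*(\cdot)(\,\cdot\,)P(\cdot)$ contributes only $\tau$-dependent smooth coefficients. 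Once these two structural facts are in place, the argument is the standard non-stationary-phase integration by parts, applied finitely many times to finitely many scalar entries, and then reassembled by the triangle inequality.
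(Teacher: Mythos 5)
Your core argument --- reduce by linearity to scalar oscillatory integrals $\int_0^\tau a(\vartheta)e^{i(\beta\vartheta/\epsilon^{\alpha+1}+g(\vartheta)/\epsilon)}\,d\vartheta$ with $\beta\neq0$, after observing that conjugation by $P$ and by $e^{\pm i\Gamma/\epsilon}$ only modifies the smooth amplitude and the slow part of the phase --- is exactly the paper's strategy. The only technical difference is that you integrate by parts once against the full phase $\Phi_\epsilon$ (using $|\Phi_\epsilon'|\geq c\,\epsilon^{-(\alpha+1)}$), whereas the paper integrates by parts against the fast factor $e^{i\beta\vartheta/\epsilon^{\alpha+1}}$ alone, which leaves a remainder of relative size $\epsilon^{\alpha}$ and therefore must be iterated $\lceil 1/\alpha\rceil$ more times. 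Your variant is a legitimate streamlining and in fact yields the sharper scalar bound $O(\epsilon^{\alpha+1})$ obtained in \eqref{eq:ordera+1}.

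The one point to correct is the normalization of $B_\epsilon$, and it matters for the second estimate. As the lemma is used in Proposition~\ref{RWAveraging}, the perturbation entering \eqref{eqpert} after the rescaling $\tau=\epsilon s$ carries an extra factor $1/\epsilon$; accordingly, the paper's proof opens by asserting $B_\epsilon(\tau)=O(1/\epsilon)$ and works with the building block $C_\epsilon(\tau)=\frac1\epsilon v(\tau)e^{i(\beta\tau/\epsilon^{\alpha+1}+h(\tau)/\epsilon)}e_{jk}$. With that normalization $|M(P,\Gamma,\epsilon)|=O(1/\epsilon)$, the scalar bound $O(\epsilon^{\alpha+1})$ yields exactly $\int_0^\tau M=O(\epsilon^{\alpha})$, and the iterated integral is $O(1/\epsilon)\cdot O(\epsilon^{\alpha})=O(\epsilon^{\alpha-1})$ --- not the $O(\epsilon^{\alpha})$ you claim. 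So $O(\epsilon^{\alpha-1})$ is not a ``harmless weakening'': it is the actual order, it is what is fed as $\eta=\alpha-1$ into Proposition~\ref{avnb}, and it is the source of the exponent $\alpha-1$ (hence of the hypothesis $\alpha>1$) in Theorem~\ref{Drive}. Your estimates are correct for the literal reading of Definition~\ref{B}, but the missing $1/\epsilon$ must be restored for the lemma to serve its purpose downstream, and with it your concluding remark about the second estimate no longer holds.
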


\begin{proof}
First 
notice that, since $B_\epsilon(\tau)=O(1/\epsilon)$, then $M(P,\Gamma,\epsilon)(\tau)=O(1/\epsilon)$, both estimates being uniform with respect to $\tau\in [0,1]$. Hence, it is enough to prove~\eqref{eq:stbp}.
%
By linearity, it is enough to prove that, for every $j,k\in \{1,\dots,n\}$, every $\beta\in \R\setminus\{0\}$, and every
$v,h\in C^\infty([0,1],\R)$, 
 the matrix 
$C_{\epsilon}(\tau)=\frac{1}{\epsilon}v(\tau)e^{i
(\frac{\beta\tau}{\epsilon^{\alpha+1}}+\frac{h(\tau)}{\epsilon}
)}e_{jk}$  
satisfies
\[\int_0^\tau 
e^{i\frac{\Gamma(\vartheta)}{\epsilon}}P^{*}(\vartheta)C_{\epsilon}(\vartheta)P(\vartheta)e^{-i\frac{\Gamma(\vartheta)}{\epsilon}}
d\vartheta=O(\epsilon^{\alpha+1}).\]
Denoting $p_{\ell m}(\tau)=(P(\tau))_{\ell,m}$ for every $\ell,m\in \{1,\dots,n\}$, we have that
\begin{align*}
&e^{i\frac{\Gamma(\tau)}{\epsilon}}P^{*}(\tau)C_{\epsilon}(\tau)P(\tau)e^{-i\frac{\Gamma(\tau)}{\epsilon}}=\\
&\frac{v(\tau)}{\epsilon}e^{i(\frac{\beta\tau}{\epsilon^{\alpha+1}}+\frac{h(\tau)}{\epsilon})}
\sum_{\ell,m=1}^n \overline{p_{j\ell}(\tau)} p_{km}(\tau)e^{\frac{i}{\epsilon} (\Gamma_\ell(\tau)-\Gamma_m(\tau))}e_{\ell m}.
\end{align*}

We conclude the proof by showing that, 
for every  $a\in C^{\infty}([0,1],\R)$, we have  
\begin{equation}\label{eq:ordera+1}
\int_0^\tau a(\vartheta) e^{i(\frac{\beta \vartheta}{\epsilon^{\alpha+1}}+\frac{h(\vartheta)}{\epsilon})} d\vartheta=O(\epsilon^{\alpha+1}),
\end{equation}
 uniformly with respect to $\tau\in [0,1]$.
 Integrating by parts, for every $\tau \in [0,1]$,
\begin{align*}
&\int_0^\tau a(\vartheta) e^{i(\frac{\beta \vartheta}{\epsilon^{\alpha+1}}+\frac{h(\vartheta)}{\epsilon})} d\vartheta \\
&=i   \frac{\epsilon^{\alpha+1}}{\beta}\int_0^\tau e^{i \frac{\beta  \vartheta}{\epsilon^{\alpha+1}}} \left( a'(\vartheta)+i \frac{h'(\vartheta)}{\epsilon}a(\vartheta)\right)e^{i\frac{h(\vartheta)}{\epsilon}}d\vartheta\\
&+ \left[-i \frac{ \epsilon^{\alpha+1}}{\beta} e^{i\frac{\beta \vartheta}{\epsilon^{\alpha+1}}}a(\vartheta)e^{i\frac{h(\vartheta)}{\epsilon}}\right]_{0}^{\tau}\\
&=-\frac{\epsilon^{\alpha}}{\beta} \int_0^\tau h'(\vartheta)a(\vartheta) e^{i (\frac{\beta \vartheta}{\epsilon^{\alpha+1}}+\frac{h(\vartheta)}{\epsilon})}d\vartheta +O(\epsilon^{\alpha+1}).
\end{align*}
Iterating the integration by parts on the integral term $\lceil\frac{1}{\alpha}\rceil$ more times, we 
obtain \eqref{eq:ordera+1}.
\end{proof}

Let us now 
prove Theorem~\ref{Drive}. This is done by providing an explicit expression of the adiabatic evolution 
of \eqref{eq;adi}.
We actually prove in the next proposition that, under the assumptions of Theorem~\ref{Drive}, the leading term of the flow of \eqref{eq;adi-eps} does not depend on $(B_\epsilon)_{\epsilon>0}$. Since equation \eqref{eq;adi} corresponds to the case $B_\epsilon\equiv 0$ for every $\epsilon>0$, one deduces Theorem~\ref{Drive} simply by triangular inequality.

\begin{proposition}\label{RWAveraging}
Consider  $A\in C^\infty([0,1],\mathfrak{u}(n))$ and let $(B_{\epsilon})_{\epsilon>0}$ be a finite sum of elements in $S(\alpha)$
with $\alpha>1$.
Assume that $i A(\cdot)$ satisfies 
a $\kappa$-th order gap condition for some nonnegative integer $\kappa$.
Select $\lambda_j\in C^\infty([0,1],\R)$, $j=1,\dots,n$,  and $P\in C^\infty([0,1],{\rm U}(n))$  such that, for $j=1,\dots,n$ and $\tau\in [0,1]$, $\lambda_j(\tau)$ 
and the $j$-th column of $P(\tau)$ are, respectively,  an eigenvalue of $iA(\tau)$ 
and a corresponding eigenvector.
Define
 $\Lambda(\tau)=\mathrm{diag}(\lambda_j(\tau))_{j=1}^n$, $\tau\in [0,1]$.
Fix $X_0\in \C^n$. 
Let $X_{\epsilon}$ be the solution of $\frac{dX_{\epsilon}(s)}{ds}= (A(\epsilon s)+B_{\epsilon}(\epsilon s))X_{\epsilon}(s)$ such that $X_{\epsilon}(0)=X_{0}$.
Set $\Upsilon_{\epsilon}(\tau)=P(\tau) \exp\left(-\frac{i}{\epsilon} \int_0^{\tau} \Lambda(\vartheta) d\vartheta\right)  \exp\left(\int_{0}^{\tau} D(\vartheta)d\vartheta\right) P^{*}(0)$, where $D$ is equal to the diagonal part of $\frac{dP^{*}}{d\tau}P$. 
Then \[\|X_{\epsilon}(\tau/\epsilon) -\Upsilon_{\epsilon}(\tau) X_0\|<c \epsilon^{\min(\frac{1}{\kappa+1},\alpha-1)}\]
for some 
$c>0$ independent of $\tau\in [0,1]$ and $\epsilon>0$.
\end{proposition}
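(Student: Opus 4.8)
The plan is to prove Proposition~\ref{RWAveraging} by reducing the perturbed adiabatic equation to the setting of Proposition~\ref{avnb} through a sequence of changes of coordinates, and then invoking the classical adiabatic theorem (with the $\kappa$-th order gap condition controlling its rate) to identify the leading term $\Upsilon_\epsilon$. First, I would rescale time by setting $\tau=\epsilon s$, so that $X_\epsilon(\tau/\epsilon)$ solves $\frac{d}{d\tau}X = \frac1\epsilon(A(\tau)+B_\epsilon(\tau))X$ on $[0,1]$. Next, I would diagonalize the unperturbed part: writing $iA(\tau)=P(\tau)\Lambda(\tau)P^*(\tau)$ with $P$ and $\Lambda$ smooth (which is legitimate under the finite-order-contact hypothesis, cf.\ the cited \cite{AKML1998} and the remark after the gap-condition definition), I would pass to the interaction frame $Y_\epsilon(\tau)=P^*(\tau)X_\epsilon(\tau/\epsilon)$. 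This produces $\frac{d}{d\tau}Y_\epsilon = \big(-\frac{i}{\epsilon}\Lambda(\tau) + \frac{dP^*}{d\tau}P + P^*B_\epsilon P\big)Y_\epsilon$.

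Second, I would remove the fast phase $-\frac{i}{\epsilon}\Lambda$ by conjugating with $e^{\frac{i}{\epsilon}\int_0^\tau\Lambda}$, i.e.\ setting $Z_\epsilon(\tau)=e^{\frac{i}{\epsilon}\int_0^\tau\Lambda(\vartheta)d\vartheta}Y_\epsilon(\tau)$. The resulting equation has generator
\[
\widetilde D_\epsilon(\tau)=e^{\frac{i}{\epsilon}\int_0^\tau\Lambda}\Big(\tfrac{dP^*}{d\tau}P + P^*B_\epsilon P\Big)e^{-\frac{i}{\epsilon}\int_0^\tau\Lambda}.
\]
The off-diagonal entries of $e^{\frac{i}{\epsilon}\int_0^\tau\Lambda}\frac{dP^*}{d\tau}P e^{-\frac{i}{\epsilon}\int_0^\tau\Lambda}$ carry oscillating factors $e^{\frac{i}{\epsilon}\int_0^\tau(\lambda_\ell-\lambda_m)}$ whose iterated integrals are $O(\epsilon^{1/(\kappa+1)})$ precisely because of the $\kappa$-th order gap condition — this is the quantitative content of the adiabatic theorem and the point where the exponent $\frac{1}{\kappa+1}$ enters. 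The diagonal part is exactly $D(\tau)=$ diag-part of $\frac{dP^*}{d\tau}P$, which is the term kept in the formula for $\Upsilon_\epsilon$. For the perturbation contribution, I would apply Lemma~\ref{conjug} with the choice $\Gamma=\int_0^\tau\Lambda$: it says exactly that $\int_0^\tau e^{\frac{i}{\epsilon}\Gamma}P^*B_\epsilon P e^{-\frac{i}{\epsilon}\Gamma}\,d\vartheta=O(\epsilon^\alpha)$ and that the corresponding second-moment integral is $O(\epsilon^{\alpha-1})$.

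Third, with these estimates in hand I would split $\widetilde D_\epsilon = D + M_\epsilon$, where $M_\epsilon$ collects the oscillating off-diagonal part of $\frac{dP^*}{d\tau}P$ together with the full $P^*B_\epsilon P$ term, and verify that $M_\epsilon$ meets the hypotheses of Proposition~\ref{avnb}: $\int_0^\tau M_\epsilon = O(\epsilon^{\min(1/(\kappa+1),\alpha)})$ and the second-moment integral is $O(\epsilon^{\min(1/(\kappa+1),\alpha-1)})=:O(\epsilon^\eta)$ with $\eta=\min(\frac1{\kappa+1},\alpha-1)$. Proposition~\ref{avnb} then yields that the flow of $\frac{d}{d\tau}Z=\widetilde D_\epsilon Z$ equals the flow of $\frac{d}{d\tau}z=D(\tau)z$ up to $O(\epsilon^{\min(\eta,1)})=O(\epsilon^\eta)$, uniformly on $[0,1]$. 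Undoing the three changes of variables ($Z_\epsilon\mapsto Y_\epsilon\mapsto X_\epsilon$, each implemented by a unitary matrix with operator norm $1$, hence error-preserving) gives $X_\epsilon(\tau/\epsilon)=P(\tau)e^{-\frac{i}{\epsilon}\int_0^\tau\Lambda}e^{\int_0^\tau D}P^*(0)X_0 + O(\epsilon^\eta)=\Upsilon_\epsilon(\tau)X_0+O(\epsilon^\eta)$, which is the claim.

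The main obstacle is the second step: proving that the off-diagonal part of $e^{\frac{i}{\epsilon}\int_0^\tau\Lambda}\frac{dP^*}{d\tau}Pe^{-\frac{i}{\epsilon}\int_0^\tau\Lambda}$ satisfies the required $O(\epsilon^{1/(\kappa+1)})$ bound on its iterated integral when eigenvalue gaps may degenerate up to order $\kappa$. Away from crossings one integrates by parts against $e^{\frac{i}{\epsilon}\int_0^\tau(\lambda_\ell-\lambda_m)}$, gaining a factor $\epsilon$ divided by $\lambda_\ell-\lambda_m$; near a point where $\lambda_\ell-\lambda_m$ and its first $\kappa-1$ derivatives vanish one must instead use a van der Corput–type estimate, which loses the exponent $\frac{1}{\kappa+1}$ — this is the classical mechanism behind higher-order adiabatic theorems, and combining it with the partition of $[0,1]$ into near-crossing and far-from-crossing regions is the delicate quantitative part. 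Everything else (the bookkeeping of changes of frame, the verification that $B_\epsilon$ stays in $S(\alpha)$ after conjugation, and the final triangle inequality deriving Theorem~\ref{Drive} from the $B_\epsilon\equiv0$ instance) is routine.
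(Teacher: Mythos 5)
Your reductions (time rescaling, diagonalizing frame $Y_\epsilon=P^*X_\epsilon$, phase removal by $e^{\frac{i}{\epsilon}\int_0^\tau\Lambda}$) reproduce exactly the paper's change of variables, and your identification of the two perturbation pieces is right. But the third step contains a genuine gap: you lump the conjugated off-diagonal adiabatic term $N_\epsilon$ (amplitude $O(1)$, primitive $O(\epsilon^{1/(\kappa+1)})$) together with the conjugated control term $M(P,\Gamma,\epsilon)$ (amplitude $O(1/\epsilon)$, primitive $O(\epsilon^{\alpha})$) into a single $M_\epsilon$ and claim the second-moment integral of Proposition~\ref{avnb} is $O(\epsilon^{\min(\frac{1}{\kappa+1},\alpha-1)})$. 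It is not: the cross term
\[
\int_0^\tau \left|M(P,\Gamma,\epsilon)(\vartheta)\right|\left|\int_0^\vartheta N_\epsilon(\theta)\,d\theta\right|d\vartheta
\]
is of size $\frac{1}{\epsilon}\cdot\epsilon^{\frac{1}{\kappa+1}}=\epsilon^{\frac{1}{\kappa+1}-1}$, which is unbounded for $\kappa\ge 1$ and merely $O(1)$ for $\kappa=0$, so no $\eta>0$ works. (The first hypothesis $\int_0^\tau M_\epsilon=O(\epsilon)$ also fails for $\kappa\ge1$, though that could be repaired by mildly generalizing Proposition~\ref{avnb}; the second-moment failure cannot.)

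This is precisely why the paper does \emph{not} apply Proposition~\ref{avnb} to the combined perturbation. It first uses the variations formula to factor the flow as $Q_\tau^\epsilon=P_\tau^\epsilon W_\tau^\epsilon$, where $P_\tau^\epsilon$ is the flow of $M(P,\Gamma,\epsilon)$ \emph{alone}; for that term the second-moment integral is $O(\epsilon^{\alpha-1})$ because the $O(\epsilon^\alpha)$ primitive compensates the $O(1/\epsilon)$ amplitude, so $P_\tau^\epsilon=\mathrm{Id}+O(\epsilon^{\alpha-1})$. Only afterwards does it average the remaining bounded generator $(P_\tau^\epsilon)^{-1}D_\epsilon P_\tau^\epsilon=D_\epsilon+O(\epsilon^{\alpha-1})$ by standard averaging theory, where the $\kappa$-th order gap condition yields the $\epsilon^{1/(\kappa+1)}$ rate. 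To fix your argument you must perform this two-step factorization (or an equivalent sequential elimination) rather than a single application of Proposition~\ref{avnb}. Your concluding discussion of the oscillatory-integral estimate for the adiabatic term is fine in spirit; the paper simply delegates it to a cited corollary.
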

\begin{proof}
Define $\Gamma(\tau)=\int_0^\tau \Lambda(s) ds$ and $Y_{\epsilon}(\tau)=\exp\left(\frac{i}{\epsilon} \Gamma(\tau) \right) P^{*}(\tau) X_{\epsilon} (\tau/\epsilon)$. 
Then $Y_{\epsilon}$ satisfies 
\begin{equation}
\frac{dY_{\epsilon}(\tau)}{d\tau} =\left(D_{\epsilon}(\tau) +M(P,\Gamma,\epsilon)(\tau)\right)Y_{\epsilon}(\tau),\label{eqpert}
\end{equation}
where $M(P,\Gamma,\epsilon)$ is defined as in~\eqref{def:MP} and  
\[ D_{\epsilon}(\tau)=\exp\left(\frac{i}{\epsilon} \Gamma(\tau)\right) \frac{dP^{*}}{d\tau}(\tau) P(\tau) \exp\left(-\frac{i}{\epsilon} \Gamma(\tau) \right).\]
Denote by $P_{\tau}^{\epsilon}$ and  $W_{\tau}^{\epsilon}$ the flows 
of the equations 
$\frac{dx(\tau)}{d\tau}= M(P,\Gamma,\epsilon)(\tau)x(\tau)$ and $\frac{dx(\tau)}{d\tau}=(P_{\tau}^{\epsilon})^{-1} D_{\epsilon}(\tau) P_{\tau}^{\epsilon}x(\tau)$,
 respectively.
By the variations formula, 
the flow 
of  equation (\ref{eqpert}) is equal to $Q_{\tau}^{\epsilon} =P_{\tau}^{\epsilon}W_{\tau}^{\epsilon} $.
By Proposition~\ref{avnb} and Lemma~\ref{conjug}, we have $P_{\tau}^{\epsilon}=\mathrm{Id} +O(\epsilon^{\alpha-1})$.
Hence 
$(P_{\tau}^{\epsilon})^{-1} D_{\epsilon}(\tau)P_{\tau}^{\epsilon}=D_{\epsilon}(\tau)+O(\epsilon^{\alpha-1})$.

Using the 
$\kappa$-th order gap condition satisfied by $i A(\cdot)$, 
we have the estimate $\int_0^\tau D_{\epsilon}(\vartheta)d\vartheta=\int_0^\tau D(\vartheta) d\vartheta+O(\epsilon^{\frac{1}{\kappa+1}})$, uniformly with respect to $\tau \in [0,1]$.
Indeed, 
each coefficient of $D_{\epsilon}(\tau)$ can be written as 
$(D_{\epsilon}(\tau))_{jl}=q_{jl}(\tau)e^{\frac{i}{\epsilon}\int_0^\tau (\lambda_j(\vartheta)-\lambda_l(\vartheta))d\vartheta}$, 
where $q_{jl}$ is in $C^{\infty}([0,1],\R)$, and the conclusion follows by, e.g., \cite[Corollary A.6]{ABS19}.

Moreover, since $D_{\epsilon}$ is bounded with respect to $\epsilon$, 
we can conclude by standard averaging theory (see, e.g., \cite[Theorem A.1]{ABS19} for a closely related formulation)
that  
\[W_{\tau}^{\epsilon}=\exp \left(\int_0^\tau D(\vartheta) d\vartheta\right)+O(\epsilon^{\min(\frac{1}{\kappa+1},\alpha-1)}).\]
It follows that 
\begin{align*}
Q_{\tau}^{\epsilon}
 =&\left(\mathrm{Id}+O(\epsilon^{\alpha-1})\right)\\
 &\times \left( \exp \left(\int_0^\tau D(\vartheta) d\vartheta\right)+O(\epsilon^{\min(\frac{1}{\kappa+1},\alpha-1)})\right)\\
=&\exp \left(\int_0^\tau D(\vartheta) d\vartheta\right)+O(\epsilon^{\min(\frac{1}{\kappa+1},\alpha-1)}),
\end{align*}
concluding the proof of the proposition.
\end{proof}

\subsection{The ensemble case}\label{sec:ensemble}

Theorem~\ref{nlevel:thm:res} 
can be extended to 
the ensemble control setting in which a parametric dispersion 
affects the Hamiltonian $H_1$. 
The key argument allowing for such an extension is that the estimates obtained in Section~\ref{sec:Drive} can be made uniform with respect to the dispersion parameter. This is because the underlying averaging estimates (we refer in particular to \cite[Corollary A.6]{ABS19} used in the proof of Proposition~\ref{RWAveraging}) can be replaced by uniform parametric estimates such as those in \cite[Corollary A.8]{ABS19}.

Let $K$ be a compact subset of $\R^N$, for some $N\in \N$, and assume that $K\ni \delta\mapsto H_{1,\delta}\in i{\rm u}(n)$ is a continuous map. 
Let $H_0$ be as  in Section~\ref{sec:decou} and extend the definition of $\Xi$ by setting 
 \[\Xi=\left\{ |E_j-E_k|\mid 1\le j,k\le n,\,
 \exists \delta\in K \mbox{\,s.t.\,}(H_{1,\delta})_{j,k}
\neq 0 \right\}.\]
With every $\delta\in K$ we can associate the 
decoupled Hamiltonians $H_{{\rm d},\delta}$ and $h_{{\rm d},\delta}$, and we notice that the control
 $u_{\epsilon}$ does not depend on $\delta$. (To be precise, one should assume condition 
 \eqref{eq:nonres-phi} to hold for every $(j,k),(p,q)$ such that 
 $E_{j}-E_{k}=E_p-E_q$ and $(H_{1,\delta})_{j,k}\ne 0\ne (H_{1,\delta})_{p,q}$ for some $\delta\in K$.) 
Then we have the following. 

\begin{theorem}\label{nlevel:thm:res-paramertric}
Set $\psi_0\in \C^n$ and let $\psi_{\epsilon}^\delta(\cdot)$ be the solution of
\begin{equation*}
i\frac{d\psi_{\epsilon}^\delta(t)}{dt}=\left(H_0+u_{\epsilon}(t)H_{1,\delta}\right) \psi_{\epsilon}^\delta(t),\qquad 0\le t\le\frac1{\epsilon^{\alpha+1}},
\end{equation*} 
with $\psi_{\epsilon}^\delta(0)=\psi_0$.
%
Define $\hat{\Psi}_{\epsilon}^\delta$ 
as the solution of
\begin{equation}
\frac{d\hat{\Psi}_{\epsilon}^\delta(s)}{ds}=h_{\rm d,\delta}(\epsilon s) \hat{\Psi}_{\epsilon}^\delta(s),\qquad 0\le s\le \frac1\epsilon,
\end{equation} 
with $\hat{\Psi}_{\epsilon}^\delta(0)=\psi_0$.
Assume that for every $\delta\in K$ the non-autonomous Hamiltonian $h_{{\rm d},\delta}$ satisfies a $\kappa$-th order gap condition for some nonnegative integer $\kappa$ independent of $\delta$.
Then
 \[\left\|\psi_{\epsilon}^\delta\left(\frac{\tau}{\epsilon^{\alpha+1}}\right) -V_{\epsilon}
 (\tau)\hat{\Psi}_{\epsilon}^\delta\left(\frac{\tau}{\epsilon}\right)\right\|<c \epsilon^{\min(\frac1{\kappa+1},\alpha-1)}\] 
for $0\le \tau\le 1$, where
  $V_{\epsilon}(\tau)=\mathrm{diag}\left(e^{-i(\frac{E_{j}\tau}{\epsilon^{\alpha+1}}+\frac{\varphi_j(\tau)}{\epsilon})}\right)_{j=1}^n$
   and 
 $c>0$ is independent of $\tau\in [0,1]$, $\delta\in K$, and $\epsilon>0$.
  \end{theorem}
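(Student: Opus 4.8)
The plan is to follow the parametric analogue of the proof of Theorem~\ref{nlevel:thm:res} line by line, tracking uniformity in $\delta\in K$ at every step. First I would perform the same change of coordinates as in the nonparametric case: set $\Psi_\epsilon^\delta(\tau)=V_\epsilon^*(\tau)\psi_\epsilon^\delta(\tau/\epsilon^{\alpha+1})$ and, after absorbing the diagonal phases (which commute), rewrite the equation for $\Psi_\epsilon^\delta$ in the form $\frac{d\Psi_\epsilon^\delta(\tau)}{d\tau}=\frac1\epsilon(A_\delta(\tau)+B_\epsilon^\delta(\tau))\Psi_\epsilon^\delta(\tau)$, where $A_\delta=-ih_{{\rm d},\delta}$ and $B_\epsilon^\delta$ is obtained from $u_\epsilon$ exactly as the $B_\epsilon$ in the proof of Theorem~\ref{nlevel:thm:res}, but with $H_1$ replaced by $H_{1,\delta}$. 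The key structural observations are that $\delta\mapsto H_{1,\delta}$ is continuous on the compact set $K$, so the coefficients $v_{jk},h_{jk}$ appearing in the decomposition of $B_\epsilon^\delta$ as a finite sum of elements of $S(\alpha)$ depend continuously on $\delta$ and are bounded uniformly in $\delta$, while the frequencies $\beta_{jk}$ (spectral gaps of $H_0$) do not depend on $\delta$ at all. Hence $\{B_\epsilon^\delta\}_{\epsilon>0}$ is, for each $\delta$, a finite sum of elements of $S(\alpha)$, and all the relevant $C^\infty$-bounds are uniform over $\delta\in K$.

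Next I would re-examine the three-step proof of Theorem~\ref{Drive} (Propositions~\ref{avnb}, \ref{RWAveraging} and Lemma~\ref{conjug}) and check that every estimate carries a constant depending only on the $C^\infty$-norms of the data, hence uniform over $\delta\in K$. In Lemma~\ref{conjug} the integration-by-parts argument yielding $\int_0^\tau M(P_\delta,\Gamma_\delta,\epsilon)=O(\epsilon^\alpha)$ uses only $\beta\neq0$ and $C^\infty$-bounds on $v,h$ and on the entries of $P_\delta$; since $\beta$ is $\delta$-independent and the other quantities are uniformly bounded on $K$, the $O(\epsilon^\alpha)$ is uniform. Proposition~\ref{avnb} then gives $P_\tau^{\epsilon,\delta}=\mathrm{Id}+O(\epsilon^{\alpha-1})$ uniformly. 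The only genuinely delicate point is the averaging of the diagonal-adiabatic term $D_\epsilon^\delta(\tau)$: here one needs the estimate $\int_0^\tau D_\epsilon^\delta(\vartheta)\,d\vartheta=\int_0^\tau D_\delta(\vartheta)\,d\vartheta+O(\epsilon^{1/(\kappa+1)})$ \emph{uniformly in $\delta$}, which is exactly the place where, as the authors indicate in Section~\ref{sec:ensemble}, one must invoke the uniform parametric averaging estimate \cite[Corollary~A.8]{ABS19} in place of \cite[Corollary~A.6]{ABS19}. This requires that $h_{{\rm d},\delta}$ satisfy a $\kappa$-th order gap condition with $\kappa$ (and, implicitly, the relevant contact-order and non-crossing data) uniform in $\delta$, which is precisely the hypothesis. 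Granting that, the standard averaging theorem gives $W_\tau^{\epsilon,\delta}=\exp(\int_0^\tau D_\delta)+O(\epsilon^{\min(1/(\kappa+1),\alpha-1)})$ uniformly, and multiplying the two factors $P_\tau^{\epsilon,\delta}W_\tau^{\epsilon,\delta}$ yields the parametric analogue of Proposition~\ref{RWAveraging}, hence of Theorem~\ref{Drive}, with a constant $c$ independent of $\tau$, $\epsilon$ and $\delta$.

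Finally, applying this uniform version of Theorem~\ref{Drive} to the system for $\Psi_\epsilon^\delta$ gives $\|\Psi_\epsilon^\delta(\tau)-\hat\Psi_\epsilon^\delta(\tau)\|<c\,\epsilon^{\min(1/(\kappa+1),\alpha-1)}$ for all $\tau\in[0,1]$, $\delta\in K$, and translating back through $\psi_\epsilon^\delta(\tau/\epsilon^{\alpha+1})=V_\epsilon(\tau)\Psi_\epsilon^\delta(\tau)$ — note $V_\epsilon$ is $\delta$-independent and unitary, so it does not affect the norm — yields the claimed bound.

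I expect the main obstacle to be not any single hard step but the bookkeeping needed to certify uniformity: one must verify that the smooth dependence $\delta\mapsto H_{1,\delta}$ indeed propagates to smooth, uniformly bounded choices of the eigenprojector $P_\delta$ and eigenvalues $\lambda_{j,\delta}$ of $iA_\delta$ (which is where the uniform $\kappa$-th order gap hypothesis and \cite[Theorem~7.6]{AKML1998} enter), and that the averaging lemma admits a genuinely parametric form — i.e.\ that \cite[Corollary~A.8]{ABS19} delivers exactly the $O(\epsilon^{1/(\kappa+1)})$ rate uniformly over the parameter compact. Once these two facts are in hand, the rest is a verbatim repetition of the arguments already given, with ``uniformly in $\tau$'' upgraded to ``uniformly in $(\tau,\delta)$'' throughout.
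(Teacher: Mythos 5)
Your proposal is correct and follows essentially the same route as the paper, which itself only sketches this extension in Section~\ref{sec:ensemble}: rerun the proof of Theorem~\ref{nlevel:thm:res} tracking uniformity in $\delta$ (the frequencies $\beta_{jk}$ are $\delta$-independent, the coefficients of $B_\epsilon^\delta$ are uniformly bounded by continuity of $\delta\mapsto H_{1,\delta}$ on the compact $K$), and replace the averaging estimate \cite[Corollary A.6]{ABS19} by its uniform parametric version \cite[Corollary A.8]{ABS19} at the one step where the gap condition enters. You correctly identify that step as the only genuinely delicate point, so your write-up is, if anything, more explicit than the paper's own justification.
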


\section{Control strategies}\label{sec:strategies}

We present in this section some control strategies obtained by applying the general construction introduced in the previous sections, using different choices of the functional control parameters $v$ and $\varphi$ (cf.~\eqref{hatH}).

\subsection{Decoupled Hamiltonian for a non-resonant coupling}\label{effective}

Let $H_0$ and $H_1$ be as in Section~\ref{sec:decou}.

\begin{definition}
Given $j,l\in \{1,\dots,n\}$, we say that $H_1$
\emph{ non-resonantly couples the levels $j$ and $l$}
if there exist $j_1,\dots,j_k\in \{1,\dots,n\}$ such that 
$j_1=j$, $j_k=l$, and, 
for every $r=1,\dots,k-1$, $(H_1)_{j_r,j_{r+1}}\ne 0$ and $\mathcal{R}_{|E_{j_r}-E_{j_{r+1}}|}$ is equal either to $\{(j_r,j_{r+1})\}$ or to $\{(j_{r+1},j_{r})\}$ (where $\mathcal{R}$ is defined as in \eqref{Rsigma}).
\end{definition}

Let us assume that $H_1$
non-resonantly couples two levels. Up to permutation, we can assume that $j=1$, $l=m$, and $j_k=k$ for $k=1,\dots,m$. 
Set $\sigma_j=|E_j-E_{j+1}|$ for $j=1,\dots,m-1$.
Notice that, in terms of the constraint \eqref{eq:nonres-phi} entering in the definition of the decoupling control 
$u_\epsilon$, the non-resonant coupling condition implies that the functions $\hat\varphi_{\sigma_j}$, $j=1,\dots,m-1$, can be chosen freely (up to the relation $\hat\varphi_{\sigma_j}(0)=0$).
Consider then  $\alpha>1$
and  $v_1,\dots,v_{m-1}$ in $C^{\infty}([0,1],\R)$, and define 
\[
u_{\epsilon}(t)=
2\epsilon\sum_{j=1}^{m-1} 
\frac{v_{j}(\epsilon^{\alpha+1} t)}{(H_1)_{j,j+1}}
\cos\left(
|E_{j}-E_{j+1}|t+
\frac{\hat \varphi_j(\epsilon^{\alpha+1}t)
}{\epsilon}\right),
\]
for every $t\in [0, \frac{1}{\epsilon^{\alpha+1}}]$.
 %
%
For simplicity of notations, let  $v=(v_{j})_{j=1}^{m-1}$ and $\varphi=(\varphi_{j})_{j=1}^{m}$,
where $\varphi_1,\dots,\varphi_m\in C^\infty([0,1],\R)$ satisfy
${\rm sign}(E_j-E_{j+1})(\varphi_j-\varphi_{j+1})=\hat\varphi_j$.

In analogy with \eqref{H-decoupled} and \eqref{hatH}, define $H_{\rm d}:\R^{m}\times \R^{m-1}\to i\mathfrak{u}(n)$ and $h_{\rm d}:[0,1]\to i\mathfrak{u}(n)$ by
\[H_{\rm d}(\delta,w)={\scriptsize\begin{pmatrix}
\delta_1&w_{1}&0&\cdots&\cdots&\cdots&0\\
w_{1}&\delta_{2}&w_{2}&\ddots&\ddots&\ddots&\vdots\\
0&w_{2}&\delta_{3}&w_{3}&\ddots&\ddots&\vdots\\
\vdots&\ddots&\ddots&\ddots&\ddots&\ddots&\vdots\\
\vdots&\ddots&\ddots&w_{m-2}&\delta_{m-1}&w_{m-1}&0\\
\vdots&\ddots&\ddots&\ddots&w_{m-1}&\delta_{m}&0\\
0&\cdots&\cdots&\cdots&0&0&0
\end{pmatrix}},\] and  $h_{\rm d}(\tau)=H_{\rm d}(-\varphi'(\tau),v(\tau))$,
where the zeros in the last line and in the last column of $H_{\rm d}(\delta,w)$ are null matrices of suitable dimensions.
Notice that, with respect to the notations of Section~\ref{sec:decou}, we are setting here $v_0\equiv0$.

\subsection{Decoupled multilevel chirp pulse}

Consider 
$u,\phi\in C^\infty([0,1],\R)$ 
to be chosen later. 
Assume that $v_{j}=u$ for every $j\in \{1,\dots,m-1\}$ and that 
$\varphi_{j}=j \phi$  for $j\in \{1,\dots,m\}$.
Hence,
$h_{\rm d}(\tau)=H_{\rm C}(-\varphi'(\tau),u(\tau))$, where
\[H_{\rm C}(\rho,w)=
{\scriptsize\begin{pmatrix}
\rho&w&0&\cdots&\cdots&\cdots&0\\
w&2 \rho&w&\ddots&\ddots&\ddots&\vdots\\
0&w&3 \rho&w&\ddots&\ddots&\vdots\\
\vdots&\ddots&\ddots&\ddots&\ddots&\ddots&\vdots\\
\vdots&\ddots&\ddots&w&(m-1)\rho&w&0\\
\vdots&\ddots&\ddots&\ddots&w&m\rho&0\\
0&\cdots&\cdots&\cdots&0&0&0
\end{pmatrix}}.\]

Recall the following result of linear algebra, that can be found, for instance, in~\cite{RouchonSarlette}.
\begin{lemma}\label{simple:eig}
Let $A$ be the real-valued tridiagonal $n\times n$ matrix 
 \[A={\scriptsize\begin{pmatrix}
a_1&c_1&0&0&0&\dots\\
c_1&a_2&c_2&0&0&\dots\\
0&c_2&a_3&c_3&0&\dots\\
0&0&c_3&a_4&c_4&\dots\\
\vdots&\vdots&\vdots&\vdots&\vdots&a_n\\
\end{pmatrix}}.\]
If $c_k\neq 0$ for every $k\in \{1,\dots,n-1\}$, then the eigenvalues of $A$ are simple.
\end{lemma}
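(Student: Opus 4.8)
The plan is to prove that every eigenvalue of $A$ has geometric multiplicity one, and then to promote this to algebraic simplicity using the symmetry of $A$. The starting observation is a rank lower bound for $A-\lambda I$ valid for every $\lambda\in\R$: if we delete the last row and the first column of $A-\lambda I$, the resulting $(n-1)\times(n-1)$ submatrix is lower triangular with diagonal entries $c_1,\dots,c_{n-1}$. Indeed, in that submatrix the entry in position $(k,k)$ is $(A-\lambda I)_{k,k+1}=c_k$, while an entry strictly above the diagonal is of the form $(A-\lambda I)_{i,k+1}$ with $i<k$, hence with column index at least two units larger than the row index, so it vanishes by tridiagonality. Since $c_k\neq 0$ for all $k$ by hypothesis, this submatrix is invertible, and therefore $\mathrm{rank}(A-\lambda I)\ge n-1$, i.e.\ $\dim\ker(A-\lambda I)\le 1$.

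An equivalent way to see the same fact, which I would probably spell out instead, is to read the eigenvector equation $(A-\lambda I)x=0$ as a three-term recurrence: the first equation expresses $c_1x_2$ in terms of $x_1$, and for $2\le k\le n-1$ the $k$-th equation expresses $c_kx_{k+1}$ in terms of $x_{k-1}$ and $x_k$. Because every $c_k$ is nonzero, the entire vector $x$ is determined by its first component $x_1$ (the last equation then being an automatically consistent scalar constraint whenever $\lambda$ is genuinely an eigenvalue). Hence the eigenspace attached to any $\lambda$ is at most one-dimensional. Either argument yields: the geometric multiplicity of each eigenvalue of $A$ is at most $1$.

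To conclude, I would invoke the fact that $A$ is real and symmetric, hence orthogonally diagonalizable, so that for $A$ algebraic multiplicity coincides with geometric multiplicity. Combining this with the previous step, every eigenvalue of $A$ has algebraic multiplicity one, i.e.\ the eigenvalues of $A$ are simple, which is the assertion of the lemma.

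There is no real obstacle in this argument; it uses nothing beyond the non-vanishing of the $c_k$ and the symmetry of $A$, and it handles all sizes $n$ uniformly. The only point that deserves a line of care is verifying that the chosen $(n-1)\times(n-1)$ submatrix of $A-\lambda I$ is indeed triangular with the $c_k$'s on its diagonal, which is immediate from the band structure of $A$.
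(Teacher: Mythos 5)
Your proof is correct. Note that the paper does not actually prove this lemma --- it states it as a classical fact and cites the literature --- so there is no in-paper argument to compare against; what you give is the standard proof (the invertible $(n-1)\times(n-1)$ minor, or equivalently the three-term recurrence, forces geometric multiplicity one, and diagonalizability of the real symmetric matrix $A$ upgrades this to algebraic simplicity), and both of your variants of the first step check out.
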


%
%
%
It follows from Lemma~\ref{simple:eig} that the only eigenvalue intersection  of 
top-left $m\times m$ submatrix of $H_C(\rho,w)$
is located at $\rho=w=0$.


Denote by $e_1,\dots,e_n$ the canonical basis of $\C^n$. 
By Theorem~\ref{nlevel:thm:res}, we have the following result.
\begin{proposition}\label{SUPERCHIRP}
Let $u,\phi\in C^{\infty}([0,1],\R)$ be such that $u(0)=u(1)=0$, 
 $\phi(0)=0$, $\phi'(0)\phi'(1)<0$,  and  
 $u(\tau)\neq 0$ for $\tau \in (0,1)$. 
For every $\epsilon>0$, let  $\psi_\epsilon:[0,\frac{1}{\epsilon^{\alpha+1}}]\to\C^n$ be the solution of \eqref{Eq:gen} 
with initial condition $e_1$
associated with the control $u_\epsilon$. 
Then, there exists $C>0$ independent of $\epsilon$ such that $\|{\psi}_\epsilon(\frac{1}{\epsilon^{\alpha+1}})-e^{i\theta_{\epsilon}}e_m\|\leq C\epsilon^{\min(1,\alpha-1)}$ for some $\theta_{\epsilon}\in \R$.
\end{proposition}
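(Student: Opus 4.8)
The plan is to apply Theorem~\ref{nlevel:thm:res} to reduce the claim about the physical system \eqref{Eq:gen} to a statement about the adiabatic evolution of the decoupled Hamiltonian $h_{\rm d}$, and then to invoke the classical adiabatic theorem for $h_{\rm d}$. First I would check that the chirp-pulse data $u,\phi$ of Proposition~\ref{SUPERCHIRP} meet the hypotheses of Theorem~\ref{nlevel:thm:res}. The non-resonant coupling assumption ensures that the constraint \eqref{eq:nonres-phi} is vacuous on the relevant gaps, so the choice $\varphi_j=j\phi$ is admissible; moreover $\varphi(0)=0$ since $\phi(0)=0$. The key point is the gap condition on $h_{\rm d}(\tau)=H_{\rm C}(-\varphi'(\tau),u(\tau))$: by Lemma~\ref{simple:eig}, whenever $u(\tau)\neq 0$ the top-left $m\times m$ block of $H_{\rm C}$ has simple spectrum, and the last $n-m$ eigenvalues are all $0$; so the only possible eigenvalue crossings occur where $u(\tau)=0$, i.e.\ at $\tau=0$ and $\tau=1$ (and possibly among the trivial zero eigenvalues, but those never coincide with the nonzero ones near the endpoints because $\phi'(0)\phi'(1)\neq 0$ forces the relevant block eigenvalue to be nonzero there). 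I would argue that at $\tau=0$ and $\tau=1$ the derivative of the relevant eigenvalue splitting is nonzero — this follows from $\phi'(0)\neq 0$, $\phi'(1)\neq 0$ together with a perturbative computation near $\rho\neq 0$, $w=0$, where the block is diagonal with entries $\rho,2\rho,\dots,m\rho$ — so $h_{\rm d}$ satisfies a $1$-GAP (i.e.\ a $\kappa$-th order gap condition with $\kappa=1$). Hence Theorem~\ref{nlevel:thm:res} applies and gives
\[
\left\|\psi_\epsilon\!\left(\tfrac{1}{\epsilon^{\alpha+1}}\right)-V_\epsilon(1)\hat\Psi_\epsilon\!\left(\tfrac1\epsilon\right)\right\|<c\,\epsilon^{\min(1,\alpha-1)},
\]
where $\hat\Psi_\epsilon$ solves the adiabatic equation driven by $h_{\rm d}$ with initial condition $e_1$.

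Next I would analyze the adiabatic evolution $\hat\Psi_\epsilon$ itself. Since $e_1$ is, at $\tau=0$, an eigenvector of $h_{\rm d}(0)$ (the block is $\phi'(0)\,\mathrm{diag}(1,2,\dots,m)$ padded by zeros, so $e_1$ is the eigenvector for the eigenvalue $-\phi'(0)$ — note $-\varphi'_1(0)=-\phi'(0)$), the adiabatic theorem for a gapped family says that $\hat\Psi_\epsilon(1/\epsilon)$ stays, up to $O(\epsilon)$ and a phase, on the eigenvector of $h_{\rm d}(1)$ obtained by following the corresponding eigenvalue branch continuously from $\tau=0$ to $\tau=1$. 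The content is then purely about which eigenvector of $h_{\rm d}(1)$ this branch lands on. At $\tau=1$, again $u(1)=0$, so $h_{\rm d}(1)$ has block $-\phi'(1)\,\mathrm{diag}(1,2,\dots,m)$; since $\phi'(0)$ and $\phi'(1)$ have opposite signs, the eigenvalue that starts as the \emph{largest} (resp.\ smallest) among $\{-\phi'(0)j\}_{j=1}^m$ ends as the \emph{smallest} (resp.\ largest) among $\{-\phi'(1)j\}_{j=1}^m$, which is the eigenvalue $-\phi'(1)m$ with eigenvector $e_m$. The branch of $e_1$: $e_1$ corresponds to $j=1$, the extreme eigenvalue $-\phi'(0)$ (the max if $\phi'(0)<0$, the min if $\phi'(0)>0$); by simplicity of the spectrum on $(0,1)$ the branch remains the extreme one throughout, hence at $\tau=1$ it is the opposite-extreme eigenvalue $-\phi'(1)m$ with eigenvector $\pm e_m$. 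Therefore $\hat\Psi_\epsilon(1/\epsilon)=e^{i\vartheta_\epsilon}e_m+O(\epsilon)$ for some phase $\vartheta_\epsilon$.

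Finally, I would combine the two estimates: $V_\epsilon(1)$ is diagonal, so $V_\epsilon(1)e_m = e^{-i(E_m/\epsilon^{\alpha+1}+\varphi_m(1)/\epsilon)}e_m$, which is again $e_m$ up to a global phase. Writing $e^{i\theta_\epsilon}$ for the product of all accumulated phases, the triangle inequality gives
\[
\left\|\psi_\epsilon\!\left(\tfrac1{\epsilon^{\alpha+1}}\right)-e^{i\theta_\epsilon}e_m\right\|\le c\,\epsilon^{\min(1,\alpha-1)}+O(\epsilon)\le C\,\epsilon^{\min(1,\alpha-1)},
\]
since $\min(1,\alpha-1)\le 1$. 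The main obstacle I anticipate is the rigorous verification of the gap condition at the two endpoints $\tau=0,1$: one must make sure that the eigenvalue branch passing through $e_1$ does not touch any other branch (including the trivial zero ones) even at $\tau\in\{0,1\}$, and that the order-$1$ nonvanishing of the relevant eigenvalue difference there really follows from $\phi'(0)\phi'(1)<0$ via first-order perturbation theory around the block-diagonal configuration $u=0$. Once that is in place, the rest is a bookkeeping of phases and a direct appeal to Theorem~\ref{nlevel:thm:res} and the standard adiabatic theorem.
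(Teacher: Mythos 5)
Your overall route (reduce via Theorem~\ref{nlevel:thm:res} to the adiabatic flow of $h_{\rm d}$, then track the eigenvalue branch of $e_1$ to $e_m$ using $\phi'(0)\phi'(1)<0$) is the paper's route, and your eigenvector-tracking argument is essentially correct. But your gap analysis is wrong, and the error is not cosmetic: it breaks the stated rate. You claim that eigenvalue crossings of the top-left $m\times m$ block occur at $\tau=0,1$ where $u$ vanishes, and you therefore invoke a first-order gap condition ($\kappa=1$). In fact the only degeneracy of the block of $H_{\rm C}(\rho,w)$ is at $(\rho,w)=(0,0)$ (Lemma~\ref{simple:eig} plus the diagonal case $w=0$), and the path $\tau\mapsto(-\phi'(\tau),u(\tau))$ never reaches that point: $u\neq 0$ on $(0,1)$ and $\phi'\neq 0$ at $\tau=0,1$. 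At the endpoints the block is diagonal with the $m$ \emph{distinct} entries $-j\phi'(\tau)$, so there is no crossing there either. By compactness the block therefore satisfies a genuine gap condition, i.e.\ $\kappa=0$, and it is precisely this that produces the exponent $\min(\tfrac{1}{0+1},\alpha-1)=\min(1,\alpha-1)$ in the statement. With your $\kappa=1$, Theorem~\ref{nlevel:thm:res} only yields $\epsilon^{\min(1/2,\alpha-1)}$, which is strictly weaker than the claimed bound whenever $\alpha>3/2$; so as written your proof does not establish the proposition. (Compare with Proposition~\ref{p:ssodd}, where genuine transverse crossings do occur and the rate correctly degrades to $\min(\tfrac12,\alpha-1)$.)

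A second, smaller point: your parenthetical dismissal of the $n-m$ trivial zero eigenvalues coming from the padding is not right as stated. Since every block eigenvalue changes sign between $\tau=0$ and $\tau=1$ (the endpoint spectra $\{-j\phi'(0)\}$ and $\{-j\phi'(1)\}$ have opposite signs), each block eigenvalue crosses $0$ at some interior time, where it coincides with the padded zero eigenvalues; moreover the zero eigenvalues are mutually degenerate if $n-m\geq 2$. So the full $n\times n$ Hamiltonian $h_{\rm d}$ does not literally satisfy \eqref{kgap} for any $\kappa$. The fix is to observe that the zero coupling between the block and its complement makes $\mathrm{span}(e_1,\dots,e_m)$ invariant, so the gap condition and Theorem~\ref{nlevel:thm:res} need only be applied on that invariant block; this should be said explicitly rather than waved away by an endpoint argument.
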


\subsubsection{Simulations}\label{sec:sim1}
Consider, for $\tau\in [0,1]$, $\phi(\tau)=-\frac{2}{\pi} \sin(\pi \tau)$ and $u(\tau)=4\sin(\pi \tau)$, and define $H_0=\text{diag}(E_j)_{j=1}^7$. Let $H_1$ be the $7\times 7$ symmetric matrix such that for every $1\le j\le k\le 7$, $(H_1)_{j,k}=1$ if $k=j+1$ and $0$ otherwise. 
Let $\psi_\epsilon:[0,\frac{1}{\epsilon^{\alpha+1}}]\to\C^n$ be the solution of \eqref{Eq:gen} 
with initial condition $e_1$ associated with the control $u_{\epsilon}$ as in Proposition~\ref{SUPERCHIRP}.
Define for $j\in \{1,\dots,7\}$, the \emph{population in level $j$} as $p_j(\tau)=| \langle \psi_{\epsilon}(\frac{\tau}{\epsilon^{\alpha+1}}), e_j \rangle |^2$ for $\tau\in [0,1]$.

We have plotted on Figure~\ref{sub1} the 
population levels of $\psi_\epsilon$
in the case $\alpha=1.2$, with $E_1=0, \ E_2=1, \ E_3 = 2.5, \ E_4 = 3, \ E_5 = 2.2, \ E_6 = 5, \ E_7 = 7$. 
The case where $\alpha=0.8$ (violating the hypothesis $\alpha>1$), while the other parameters are the same, is illustrated in  Figure~\ref{sub2}.

\begin{figure}[h!]
\begin{center}
    \subfigure[$\alpha=1.2$]{\label{sub1} \includegraphics[scale=0.6]{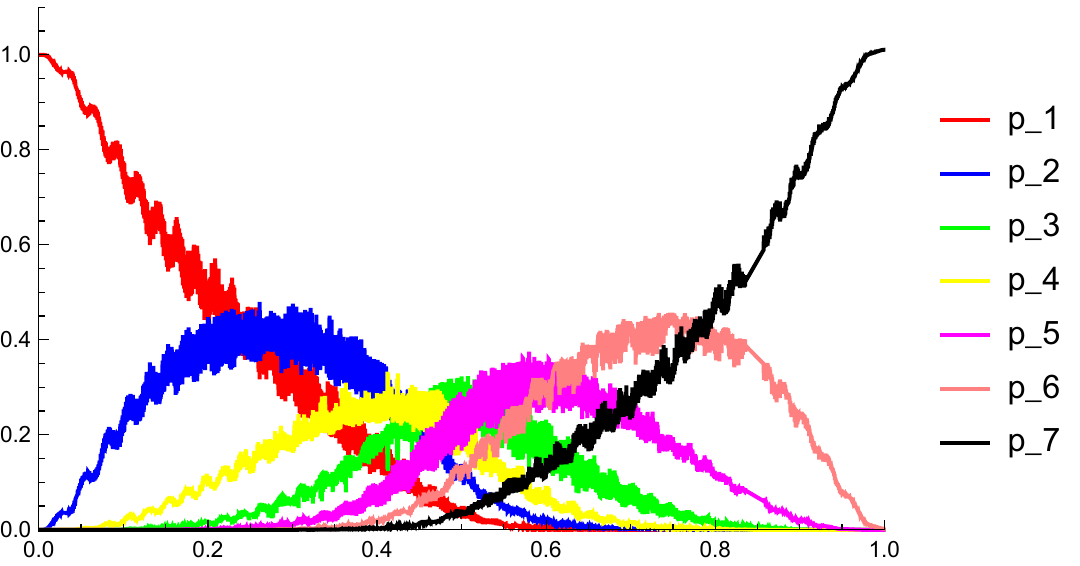}}
    \subfigure[$\alpha=0.8$]{\label{sub2} \includegraphics[scale=0.6]{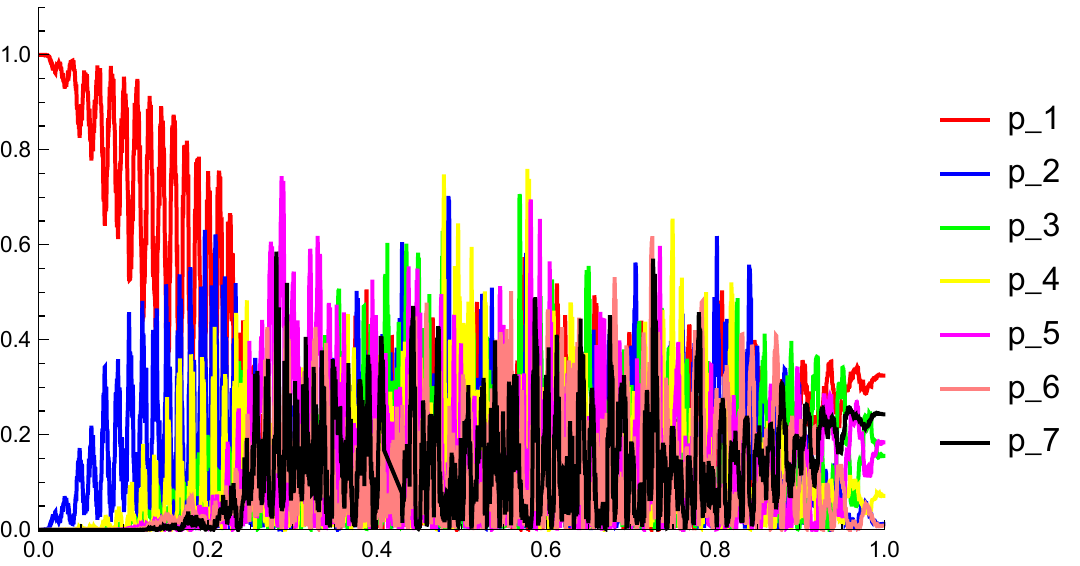}}
    \caption{Evolution of the  populations $(p_j(\tau))_{j=1}^7$ as functions of the renormalized time $\tau=\epsilon^{\alpha+1} t$, 
  for   $\epsilon=10^{-2}$ and  $\alpha=1.2$ (\ref{sub1}), $\alpha=0.8$ (\ref{sub2}).}
    \label{sup:chirp}
    \end{center}
\end{figure}

\begin{remark}\label{rem:robust-superchirp}
Applying Theorem~\ref{nlevel:thm:res-paramertric} instead of Theorem~\ref{nlevel:thm:res}, we can recover a version of Proposition~\ref{SUPERCHIRP} allowing for parametric dispersion in the controlled Hamiltonian $H_1$ of the form $H_{1,\delta}=\delta H_1$, with $\delta\in [\delta_0,\delta_1]$ for some $0<\delta_0<\delta_1$. Indeed, 
the property of non-resonantly coupling two levels is independent of $\delta$ and 
$u_\epsilon$ depends on $\delta$  only through a positive constant multiplicative factor. Hence the control $u$ in Proposition~\ref{SUPERCHIRP} is also modified 
through a positive constant multiplicative factor, while $\phi$ does not depend on $\delta$. 
%
Proposition~\ref{SUPERCHIRP} provides then an explicit robust control strategy 
reflecting the ensemble controllability result 
obtained by Chambrion in \cite[Proposition 1]{TENS}, where it is mentioned 
that 
an ensemble
control strategy is difficult to implement 
because of the ``poor efficiency of tracking strategies via Lie brackets". 
\end{remark}
%
%
%

\subsection{Decoupled multilevel STIRAP}
	
In order
to reduce the populations in the intermediate levels 
along the controlled motion
(see, for instance \cite{SHORE91}), 
it is interesting to introduce another control strategy,
which generalizes the well-known Stimulated Raman Adiabatic Passage (STIRAP). 
We are going to see that the proposed 
strategy is different depending on the parity of the integer $m$ defined in Section~\ref{effective}.

Let $(d_j)_{j=1}^m\subset \R$ be increasing 
and consider $u_1,u_2\in C^\infty([0,1],\R)$ 
to be chosen later. 
For $j\in \{1,\dots,m-1\}$, let $v_{j}=u_1$ if $j$ is odd, and $v_{j}=u_2$ if $j$  is even.
By choosing $\varphi_{j}(\tau)=-d_j \tau$ for $j\in \{1,\dots,m\}$ and $\tau\in [0,1]$, we have that
$h_{\rm d}(\tau)=H_{\rm S}(u_1(\tau),u_2(\tau))$, where
\begin{equation}\label{eq:*u}
H_{\rm S}(w_1,w_2)=
{\scriptsize\begin{pmatrix}
d_1&w_{1}&0&\cdots&\cdots&\cdots&0\\
w_{1}&d_{2}&w_{2}&\ddots&\ddots&\ddots&\vdots\\
0&w_{2}&d_{3}&w_{1}&\ddots&\ddots&\vdots\\
\vdots&\ddots&\ddots&\ddots&\ddots&\ddots&\vdots\\
\vdots&\ddots&\ddots&w_{1}&d_{m-1}&w_2&0\\
\vdots&\ddots&\ddots&\ddots&w_2&d_{m}&0\\
0&\cdots&\cdots&\cdots&0&0&0
\end{pmatrix}}.
\end{equation}
(The expression of $H_{\rm S}$ in \eqref{eq:*u} corresponds to the case where $m$ is odd, the roles of $w_1$ and $w_2$ in the last lines of the matrix being inverted if $m$ is even). 
%
%

%
%
%
%

Denote by $\lambda_1(w_1,w_2)\le \dots \le \lambda_m(w_1,w_2)$ the eigenvalues of the 
top-left $m\times m$ submatrix of $H_S(w_1,w_2)$.
As a consequence of Lemma~\ref{simple:eig}, the only
eigenvalue intersections between $\lambda_j$ and $\lambda_{j+1}$, $j\in\{1,\dots,m-1\}$,
are located either on the axis $w_1=0$ or on the axis $w_2=0$.

Depending on the parity of $m$, one of the following two lemmas can be applied. 
The lemmas can be deduced from \cite[Section II]{adami-boscain}.
\begin{lemma}[$m$ odd]\label{eig:int:odd}
Assume that $m$ is odd.
Then there exist two finite positive sequences  $(w_{2,k})_{k=1}^{\frac{m-1}{2}}$ and $(w_{1,k})_{k=\frac{m+1}{2}}^{m-1}$, which are respectively increasing and decreasing, such that\\
$\bullet$ for  $k\in \{1,\dots,\frac{m-1}{2}\}$, 
$w_2\mapsto \lambda_{k}(0,w_2)$ and $w_2\mapsto \lambda_{k+1}(0,w_2)$ have a transverse intersection at $w_{2,k}$;\\
$\bullet$ for  $k\in \{\frac{m+1}{2},\dots,m-1\}$, 
$w_1\mapsto \lambda_{k}(w_1,0)$ and $w_1\mapsto \lambda_{k+1}(w_1,0)$ have a transverse intersection at $w_{1,k}$.\\
Moreover, $\lambda_{k+1}(0,w_2)$ does not intersect $\lambda_k(0,w_2)$ nor $\lambda_{k+2}(0,w_2)$ for $w_2\in (w_{2,k}, w_{2,k+1})$, and $\lambda_{k+1}(w_1,0)$ does not intersect $\lambda_k(w_1,0)$ nor $\lambda_{k+2}(w_1,0)$ for $w_1\in (w_{1,k+1}, w_{1,k})$.
\end{lemma}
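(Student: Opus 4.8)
The plan is to reduce the statement to the known structure results for tridiagonal two-parameter families in \cite[Section II]{adami-boscain} by exploiting the block structure of $H_{\rm S}(w_1,w_2)$ when one of the two parameters vanishes. First I would observe that on the axis $w_1=0$ the top-left $m\times m$ submatrix of $H_{\rm S}(0,w_2)$ decomposes into a direct sum of tridiagonal blocks coupled only by $w_2$: the odd-indexed levels $1,3,5,\dots$ are disconnected from each other and each pair $(2k,2k+1)$ gets linked through an entry $w_2$, so that the spectrum is governed by $\frac{m-1}{2}$ elementary $2\times 2$-type interactions plus isolated diagonal entries $d_j$. Since the $(d_j)_{j=1}^m$ are strictly increasing, at $w_2=0$ the eigenvalues are ordered and distinct, and as $w_2$ grows each relevant pair of branches $\lambda_k(0,\cdot)$, $\lambda_{k+1}(0,\cdot)$ undergoes exactly one transverse crossing, at a value I call $w_{2,k}$; the monotonicity of the $d_j$ forces the sequence $(w_{2,k})_{k=1}^{(m-1)/2}$ to be increasing. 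The symmetric statement on the axis $w_2=0$, where now the even-indexed couplings survive, gives the decreasing sequence $(w_{1,k})_{k=(m+1)/2}^{m-1}$; the decreasing (rather than increasing) monotonicity here is exactly the mirror-image phenomenon, due to the fact that the ``outer'' crossings on the $w_1$-axis involve the large-$d$ end of the spectrum.

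Next I would establish the separation (``does not intersect'') statements. The key point is that, between two consecutive crossings $w_{2,k}$ and $w_{2,k+1}$, the branch $\lambda_{k+1}(0,\cdot)$ is strictly sandwiched between $\lambda_k(0,\cdot)$ and $\lambda_{k+2}(0,\cdot)$: Lemma~\ref{simple:eig} already guarantees that no eigenvalue intersection can occur off the axes $w_1=0$, $w_2=0$, and on the axis $w_1=0$ the localized block structure implies that the only branch that can meet $\lambda_{k+1}$ at all is $\lambda_k$ or $\lambda_{k+2}$, at the crossing points $w_{2,k}$ and $w_{2,k+1}$ respectively. Hence on the open interval $(w_{2,k},w_{2,k+1})$ strict inequalities $\lambda_k<\lambda_{k+1}<\lambda_{k+2}$ hold, which is precisely the claimed non-intersection. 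Transversality of each crossing follows because the coupling entry is linear in the parameter and the $2\times2$ reduced problem has an eigenvalue difference of the form $\sqrt{(\text{affine})^2+(\text{const}>0)}$ near the crossing, whose derivative does not vanish there; this is exactly the content one borrows from \cite[Section II]{adami-boscain}.

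The main obstacle I anticipate is bookkeeping rather than conceptual: one must carefully match the global labelling $\lambda_1\le\dots\le\lambda_m$ of the full $m\times m$ matrix with the local two-level pictures, keeping track of which pair of global indices $(k,k+1)$ corresponds to which localized $2\times2$ interaction as $w_2$ (resp.\ $w_1$) increases, and checking that the crossings occur in the stated order and with the stated monotonicity. This amounts to an induction on $k$ using the interlacing of eigenvalues of the successive tridiagonal truncations together with the strict monotonicity of $(d_j)_{j=1}^m$; once the reduction to the axis blocks is in place the remaining verifications are the routine $2\times2$ computations that I would not reproduce here, citing \cite[Section II]{adami-boscain} instead.
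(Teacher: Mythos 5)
The paper offers no proof of this lemma beyond the pointer to \cite[Section II]{adami-boscain}, and your overall strategy --- on $w_1=0$ the relevant $m\times m$ block splits into the isolated entry $d_1$ plus the $2\times2$ blocks on $\{2k,2k+1\}$, symmetrically on $w_2=0$ with $d_m$ isolated --- is the right skeleton. But two steps that you dismiss as bookkeeping are exactly where the content lies, and as written they do not work. First, your transversality argument invokes an eigenvalue difference of the form $\sqrt{(\text{affine})^2+(\text{const}>0)}$; such an expression is bounded away from zero, so it describes the \emph{avoided} crossing inside a single $2\times2$ block, not any of the intersections in the statement. The actual crossings $\lambda_k=\lambda_{k+1}$ on $w_1=0$ occur between branches of \emph{different} blocks: each is the crossing of the constant branch $d_1$ with the decreasing branch $\mu_{2k}^-(w_2)=\frac{d_{2k}+d_{2k+1}}{2}-\sqrt{\bigl(\frac{d_{2k+1}-d_{2k}}{2}\bigr)^2+w_2^2}$, which happens at $w_{2,k}=\sqrt{(d_{2k}-d_1)(d_{2k+1}-d_1)}$. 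Transversality is then simply $(\mu_{2k}^-)'(w_{2,k})\neq 0$, and the monotonicity of $(w_{2,k})_k$ is read off the explicit formula. Without this identification of which analytic branches meet, neither the transversality nor the ordering of the $w_{2,k}$ is established.

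Second, your separation step rests on the assertion that the only branch that can meet $\lambda_{k+1}$ at all is $\lambda_k$ or $\lambda_{k+2}$, and only at $w_{2,k}$ and $w_{2,k+1}$. This is false as a statement about the whole axis: branches of two distinct $2\times2$ blocks do cross each other. For instance, with $m=5$ and $d_j=j$, the upper branch of the block $\{2,3\}$ meets the lower branch of $\{4,5\}$ at $w_2=\sqrt{3}/2$, producing an intersection $\lambda_3=\lambda_4$ that is not among the $w_{2,k}$. The correct argument is local to the indices in the statement: on $(w_{2,k},w_{2,k+1})$ the branch realizing $\lambda_{k+1}$ is the constant $d_1$; the $k$ branches $\mu_2^-,\dots,\mu_{2k}^-$ have already crossed $d_1$ and keep decreasing, while every other branch ($\mu_{2j}^+$ for all $j$, and $\mu_{2j}^-$ for $j>k$) stays strictly above $d_1$ there. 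This both yields the strict inequalities $\lambda_k<\lambda_{k+1}<\lambda_{k+2}$ on the open interval and confirms that the coinciding ordered eigenvalues at $w_{2,k}$ are indeed the $k$-th and $(k{+}1)$-th. With these two points repaired, and the mirror argument with $d_m$ isolated on $w_2=0$ giving the decreasing sequence $(w_{1,k})$, your plan goes through.
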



\begin{lemma}[$m$ even]\label{eig:int:ev}
Assume that $m$ is even. 
Then there exist two finite positive sequences $(w_{2,k})_{k=1}^{\frac{m}{2}-1}$, $(w_{1,k})_{k=\frac{m}{2}}^{m-2}$, which are, respectively, increasing and decreasing, such that\\
$\bullet$  for  $k\in \{1,\dots,\frac{m}{2}-1\}$, 
$w_2\mapsto \lambda_{k}(0,w_2)$ and $w_2\mapsto \lambda_{k+1}(0,w_2)$ have a transverse intersection at $w_{2,k}$;\\
$\bullet$ for  $k\in \{\frac{m}{2},\dots,m-2\}$, 
$w_1\mapsto \lambda_{k}(w_1,0)$ and $w_1\mapsto \lambda_{k+1}(w_1,0)$ have a transverse intersection at $w_{1,k}$.\\
Moreover, $\lambda_{k+1}(0,w_2)$ does not intersect $\lambda_k(0,w_2)$ nor $\lambda_{k+2}(0,w_2)$ for $w_2\in (w_{2,k}, w_{2,k+1})$, and $\lambda_{k+1}(w_1,0)$ does not intersect $\lambda_k(w_1,0)$ nor $\lambda_{k+2}(w_1,0)$ for $w_1\in (w_{1,k+1}, w_{1,k})$.
Furthermore, there exists $w_{2}^\star>0$ such that
$w_2\mapsto \lambda_{m-1}(0,w_2)$ and $w_2\mapsto \lambda_{m}(0,w_2)$ have a transverse intersection at $w_{2}^\star$ and \\
$\bullet$  $\lambda_{m-1}(0,w_2)$ does not intersect $\lambda_{m-2}(0,w_2)$ nor $\lambda_{m}(0,w_2)$ for $w_2\in (0,w_2^{\star})$;\\
$\bullet$ $\lambda_{m}(0,w_2)$ does not intersect $\lambda_{m-1}(0,w_2)$ for $w_2\in (w_2^{\star},+\infty)$.\\
Finally, $\lambda_{m}(w_1,0)$ does not intersect $\lambda_{m-1}(w_1,0)$ for $w_1\in [0,+\infty)$.
\end{lemma}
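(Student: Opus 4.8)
The plan is to prove Lemma~\ref{eig:int:ev} (and, in exactly the same way, Lemma~\ref{eig:int:odd}) by restricting the top-left $m\times m$ submatrix of $H_{\rm S}$ to each coordinate axis of the $(w_1,w_2)$-plane, where it becomes block-diagonal with blocks of size one and two, and then reading off the eigenvalue crossings from the explicit eigenvalues of the $2\times 2$ blocks. Indeed, the super-diagonal entry of $H_{\rm S}(w_1,w_2)$ in position $(\ell,\ell+1)$ equals $w_1$ when $\ell$ is odd and $w_2$ when $\ell$ is even, so setting $w_1=0$ produces the blocks $\{1\},\{2,3\},\{4,5\},\dots$ and setting $w_2=0$ the blocks $\{1,2\},\{3,4\},\dots$; it is the placement of the scalar ($1\times 1$) blocks at the two ends that depends on the parity of $m$ and produces the difference between the two lemmas. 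For $m$ even, on the axis $w_1=0$ both ends are scalar blocks $\{1\}$ and $\{m\}$, so the values $d_1$ and $d_m$ are frozen, whereas on the axis $w_2=0$ there is no scalar block. The eigenvalues of the $2\times 2$ block with diagonal entries $d_j<d_{j+1}$ and off-diagonal entry $w$ are $\mu_\pm(w)=\frac{d_j+d_{j+1}}{2}\pm\frac12\sqrt{(d_{j+1}-d_j)^2+4w^2}$, so $\mu_-$ is strictly decreasing to $-\infty$, $\mu_+$ is strictly increasing to $+\infty$, the two never meet (their gap is at least $d_{j+1}-d_j$), and $\mu_\pm(w)=\pm w+\frac{d_j+d_{j+1}}{2}+o(1)$. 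A short computation with these formulas shows that the decreasing branches coming from the various blocks keep, for all $w$, the order they have near $w=0$ (the order of their blocks), and likewise the increasing branches; and that a decreasing branch stays strictly above a frozen value $d_1$, and an increasing branch strictly below a frozen value $d_m$, until one explicitly computable time.

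By Lemma~\ref{simple:eig}, as soon as $w_1 w_2\ne 0$ all $m$ eigenvalues of the submatrix are simple, so every crossing of two of the ordered eigenvalues $\lambda_1\le\dots\le\lambda_m$ lies on one of the two axes, and it remains to carry out the bookkeeping axis by axis. On $w_1=0$ with $m$ even, as $w_2$ grows from $0$ the frozen value $d_1$ is overtaken from below, one at a time and in increasing order of time, by the $(m-2)/2$ decreasing branches; the $k$-th such overtaking is the crossing of $\lambda_k$ and $\lambda_{k+1}$, it occurs at $w_{2,k}$, it is transverse because one of the two curves is constant and the other strictly monotone, the sequence $(w_{2,k})_{k=1}^{m/2-1}$ is increasing by the ordering of the decreasing branches, and on $(w_{2,k},w_{2,k+1})$ one has $\lambda_{k+1}\equiv d_1$ together with $\lambda_k<d_1<\lambda_{k+2}$, which is exactly the claimed window property. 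Dually, the frozen value $d_m$ is pierced from below by the increasing branches, the first piercing being by the highest of them and being the crossing of $\lambda_{m-1}$ and $\lambda_m$ at $w_2^\star$; the two ``no-intersection'' ranges $(0,w_2^\star)$ and $(w_2^\star,+\infty)$ follow by identifying, in each range, which branch equals $\lambda_{m-2},\lambda_{m-1},\lambda_m$ and noting that these branches meet at most at $w_2^\star$. On the axis $w_2=0$ there is no frozen value, so $\lambda_m$ is always the unique highest increasing branch and never meets $\lambda_{m-1}$; and the crossings $(w_{1,k})_{k=m/2}^{m-2}$, which occur near the top of the spectrum, are the times at which the highest decreasing branch is overtaken, in turn and highest first, by the increasing branches lying below it, forming a decreasing sequence by the same ordering argument and carrying the corresponding window property. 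The case $m$ odd is handled identically with a single scalar block --- at the bottom on $w_1=0$, at the top on $w_2=0$ --- so that no additional time $w_2^\star$ is needed.

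The main obstacle is precisely this bookkeeping: one must follow, interval by interval in $w_2$ (respectively $w_1$), which branch of which $2\times 2$ block plays the role of each $\lambda_k$, check that the crossings listed in the statement --- and no spurious ones inside the declared windows --- occur between consecutive labels, and do all this while tracking both parities of $m$ and the different positions of the scalar blocks on the two axes. Rather than writing this out in full I would obtain it by translating $H_{\rm S}(w_1,w_2)$ into the setting of \cite[Section~II]{adami-boscain}, where the eigenvalue-crossing picture for tridiagonal matrices of exactly this form is established and from which both lemmas follow at once.
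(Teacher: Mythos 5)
Your proposal is correct and follows essentially the same route as the paper, which proves both lemmas simply by invoking \cite[Section~II]{adami-boscain}: the axis-by-axis block decomposition into scalar and $2\times 2$ blocks, the explicit branch formulas, and the resulting ordering/bookkeeping are exactly the mechanism behind that citation, to which you also ultimately defer. The additional checks you sketch (order preservation among decreasing and among increasing branches, explicit crossing times with the frozen values $d_1$ and $d_m$) are sound and make the deduction more self-contained than the paper's one-line proof.
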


In order to illustrate Lemmas \ref{eig:int:odd} and \ref{eig:int:ev}, we have plotted in Figures \ref{5lev} and \ref{6lev} the spectrum of $H_{\rm S}$ for $m=5$ and $m=6$ on the axes $w_1=0$ and $w_2=0$.

\begin{figure}[h!]
    \centering
    \subfigure[The eigenvalues of $H_{\rm S}(0,w_2)$ for $m=5$.]{\label{sub12} \includegraphics[scale=0.25]{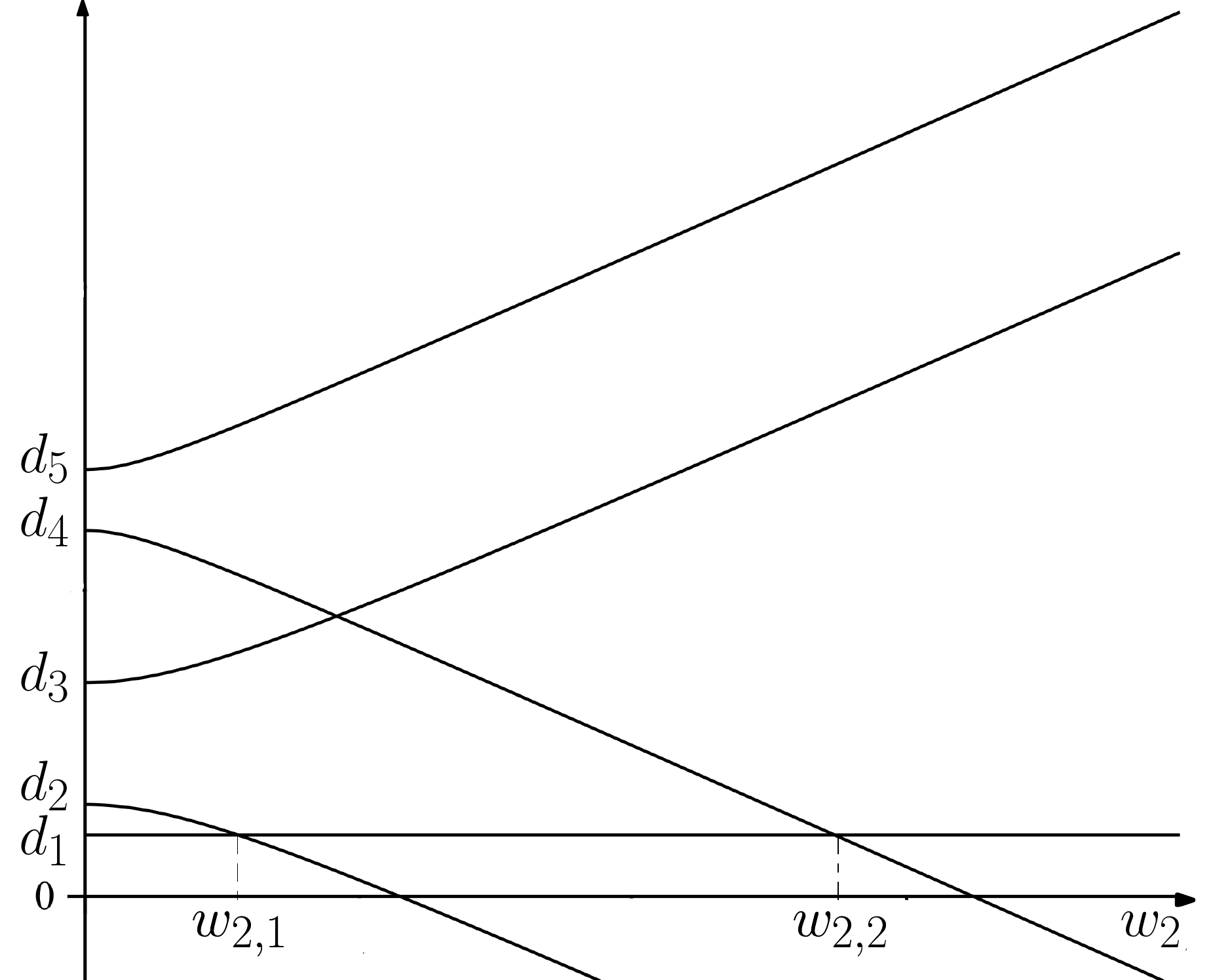}}
    \subfigure[The eigenvalues of $H_{\rm S}(w_1,0)$ for $m=5$.]{\label{sub22} \includegraphics[scale=0.25]{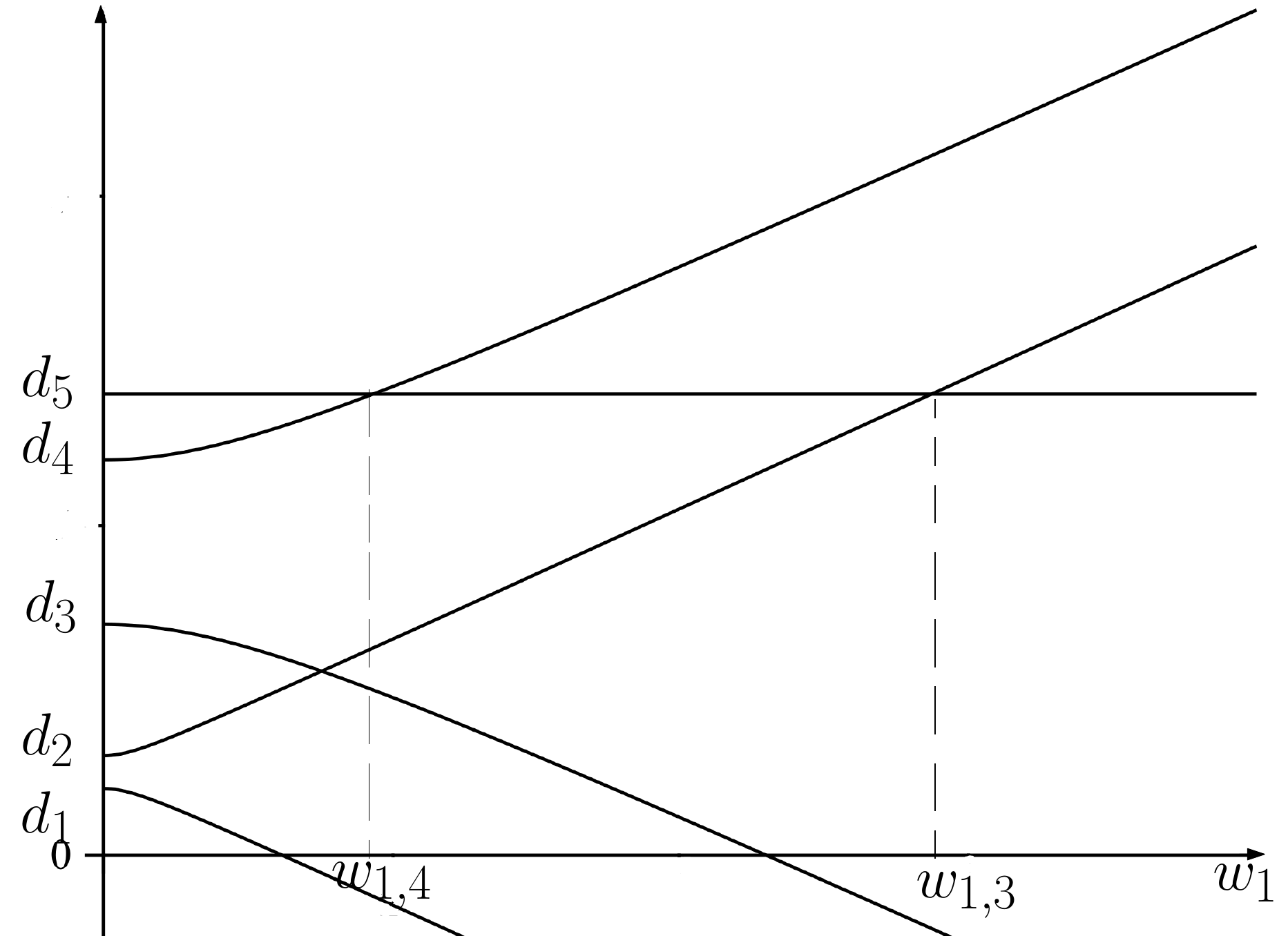}}
    \caption{ \label{5lev}}
\end{figure}

\begin{figure}[h!]
    \centering
    \subfigure[The eigenvalues of $H_{\rm S}(0,w_2)$ for $m=6$.]{\label{sub13} \includegraphics[scale=0.25]{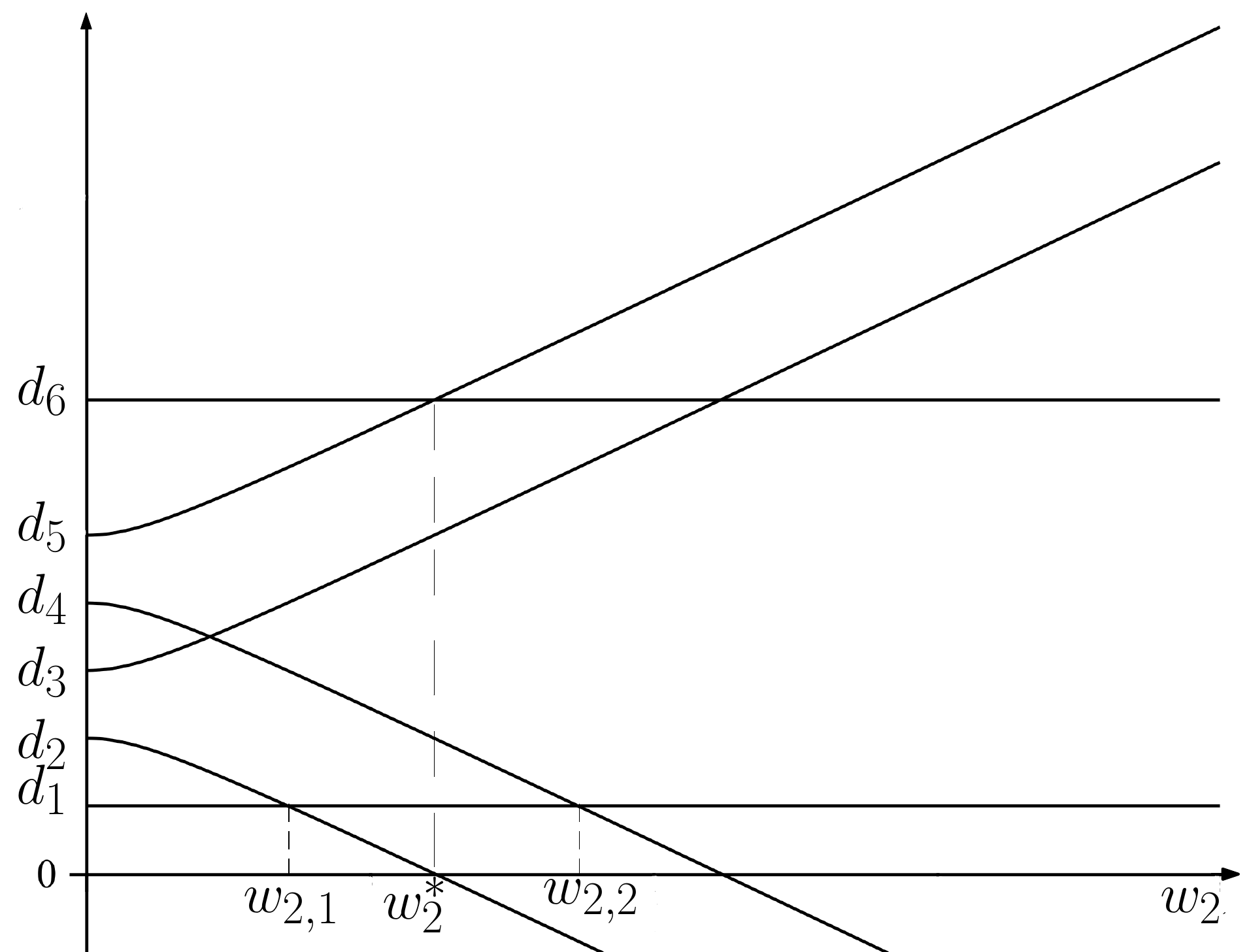}}
    \subfigure[The eigenvalues of $H_{\rm S}(w_1,0)$ for $m=6$.]{\label{sub23} \includegraphics[scale=0.25]{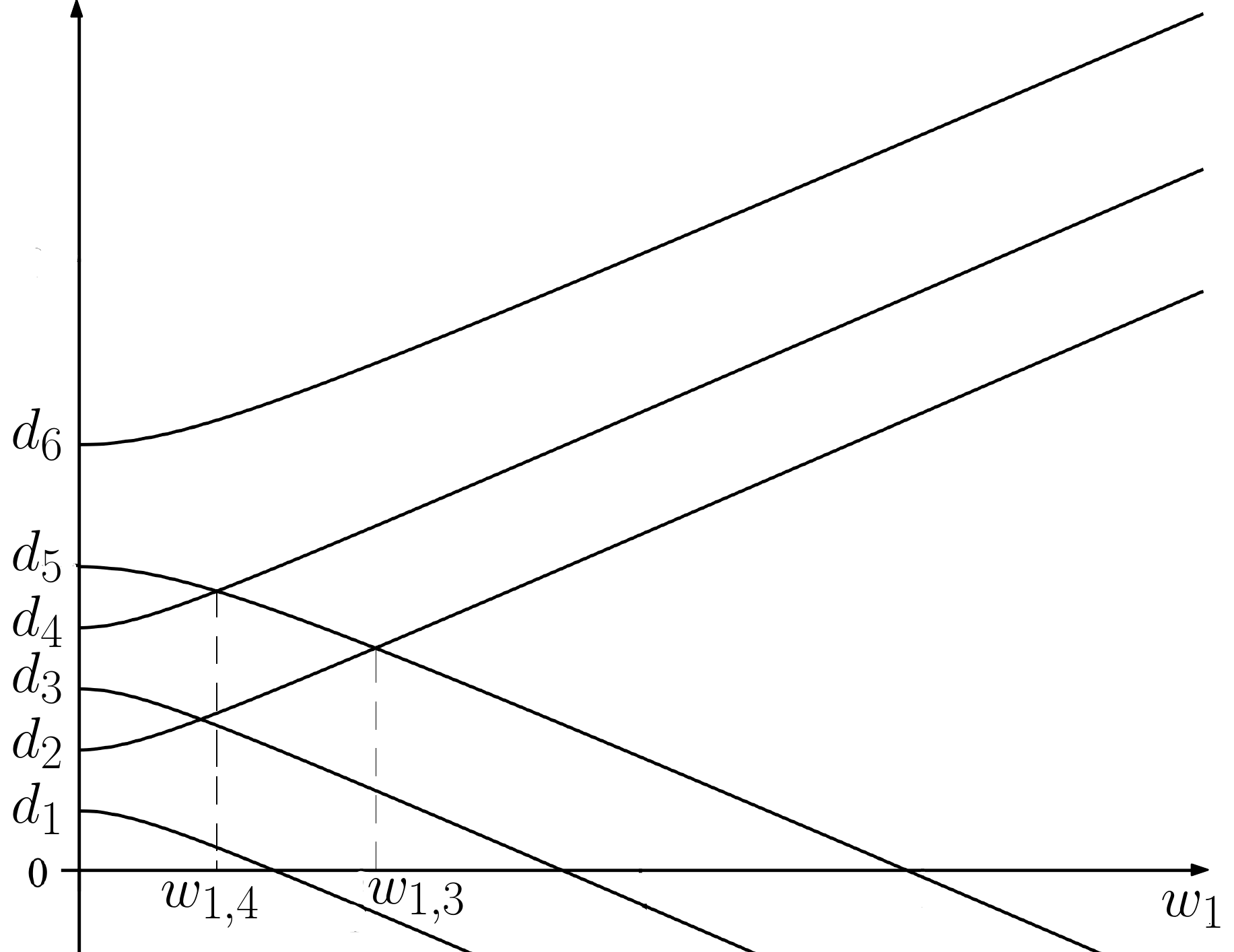}}
    \caption{\label{6lev}}
\end{figure}

\begin{remark}
The eigenvalue intersections $\lambda_k(w_1,w_2)=\lambda_{k+1}(w_1,w_2)$
described in Lemmas \ref{eig:int:odd} and \ref{eig:int:ev} are not necessarily conical, 
even if they all are transverse 
in the directions that we are interested in (the horizontal or the vertical axis of the plane $(w_1,w_2)$). 
It is interesting to notice that numerical simulations show the presence of both conical and semi-conical eigenvalue intersections, using the terminology of \cite{ABS19,boscain_approximate_2015,Bos}.  
%
\end{remark}


We deduce from Theorem~\ref{nlevel:thm:res} and Lemma~\ref{eig:int:odd} %
the following proposition.
\begin{proposition}[$m$ odd]\label{p:ssodd}
Let $(u_1,u_2)$ satisfy the following properties: there exist $0<\tau_1<\tau_2<1$ such that $u_1|_{[0,\tau_1]}\equiv 0$, $u_2|_{[\tau_2,1]}\equiv 0$, $u_1(\tau),u_2(\tau)>0$ for $\tau\in (\tau_1,\tau_2)$, 
$u_2$ is increasing on $[0,\tau_1]$ from $0$ to a value larger than $w_{2,\frac{m-1}{2}}$, $u_1$ is decreasing on $[\tau_2,1]$ from a value larger than $w_{1,\frac{m+1}{2}}$ to $0$ (see Figure~\ref{loopodd}).
For every $\epsilon>0$, let 
$\psi_\epsilon:[0,\frac{1}{\epsilon^{\alpha+1}}]\to\C^n$ be the solution of \eqref{Eq:gen} 
with initial condition $e_1$
associated with the control $u_\epsilon$. 
Then, there exists $C>0$ independent of $\epsilon$ such that $\|{\psi}_\epsilon(\frac{1}{\epsilon^{\alpha+1}})-e^{i\theta_{\epsilon}}e_m\|\leq C\epsilon^{\min(\frac1{2},\alpha-1)}$ for some $\theta_{\epsilon}\in \R$.
\end{proposition}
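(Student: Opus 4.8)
The plan is to deduce the claim from Theorem~\ref{nlevel:thm:res} applied to $h_{\rm d}(\tau)=H_{\rm S}(u_1(\tau),u_2(\tau))$, by tracking the smooth eigenvector branch of $h_{\rm d}$ issued from $\psi_0=e_1$ along the control path $\tau\mapsto(u_1(\tau),u_2(\tau))$. This path starts at the origin of the $(w_1,w_2)$-plane, climbs the axis $w_1=0$ on $[0,\tau_1]$ up to a height $>w_{2,(m-1)/2}$, runs through the open first quadrant on $(\tau_1,\tau_2)$, reaches the axis $w_2=0$ at a point with $w_1>w_{1,(m+1)/2}$, and descends that axis back to the origin on $[\tau_2,1]$. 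First I would verify the gap condition: on $(\tau_1,\tau_2)$ the path lies in $\{w_1,w_2>0\}$, where every off-diagonal entry of the tridiagonal block is nonzero and hence, by Lemma~\ref{simple:eig}, the spectrum is simple; on the two axial segments, the discussion preceding Lemma~\ref{eig:int:odd} together with Lemma~\ref{eig:int:odd} itself show that the only eigenvalue coincidences are the transverse two-level crossings at the $w_{2,k}$, resp.\ $w_{1,k}$, with no third level involved. Thus $h_{\rm d}$ satisfies a first-order gap condition, Theorem~\ref{nlevel:thm:res} applies with $\kappa=1$, and (using \cite[Theorem~7.6]{AKML1998}, the crossings having contact order one) one may fix a $C^\infty$ eigenframe. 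If $n>m$ one works on the $h_{\rm d}$-invariant subspace $\mathrm{span}(e_1,\dots,e_m)$ containing $\psi_0$.

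Next I would track the branch through the three segments. Since $u_1(0)=u_2(0)=u_1(1)=u_2(1)=0$, at the endpoints $h_{\rm d}=\mathrm{diag}(d_1,\dots,d_m)$, and $e_1$ is the eigenvector of $d_1$. On $[0,\tau_1]$ level $1$ is uncoupled (its only coupling, the $(1,2)$ entry, equals $u_1\equiv0$), so the branch through $e_1$ is the constant one $\lambda\equiv d_1$, eigenvector $\equiv e_1$; at each crossing $w_{2,k}$ it is overtaken by one lower branch, and the swap/separation structure of Lemma~\ref{eig:int:odd} (crossing $w_{2,k}$ exchanges the $k$-th and $(k+1)$-th eigenvalues, which then stay separated until $w_{2,k+1}$) shows inductively that, after the $k$-th crossing, $d_1$ occupies the $(k+1)$-th slot; hence at $\tau_1$ (where $u_2(\tau_1)>w_{2,(m-1)/2}$, past all crossings) $e_1$ carries the $\tfrac{m+1}2$-th eigenvalue. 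On $(\tau_1,\tau_2)$ the spectrum is simple, so by continuity of eigenvalues the branch coincides there with the $\tfrac{m+1}2$-th ordered eigenvector. At $\tau_2$ (where $u_1(\tau_2)>w_{1,(m+1)/2}$) level $m$ is uncoupled, $d_m$ has constant eigenvector $e_m$, and the symmetric count using the decreasing crossings $w_{1,k}$ shows $d_m$ occupies the same $\tfrac{m+1}2$-th slot for $w_1>w_{1,(m+1)/2}$; hence the branch equals $\pm e_m$ at $\tau_2$. Finally, on $[\tau_2,1]$ it rides the constant $d_m$-branch (crossing the remaining branches transversally at the $w_{1,k}$), so it equals $\pm e_m$ at $\tau=1$. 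Here $m$ being odd is exactly what makes $\tfrac{m+1}2$ an integer and the \emph{central} slot, so that the two axial sweeps are compatible.

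To conclude, by Theorem~\ref{nlevel:thm:res} with $\kappa=1$ we have $\|\psi_\epsilon(\tau/\epsilon^{\alpha+1})-V_\epsilon(\tau)\hat\Psi_\epsilon(\tau/\epsilon)\|<c\,\epsilon^{\min(1/2,\alpha-1)}$, and by the adiabatic approximation of $h_{\rm d}$ (Proposition~\ref{RWAveraging} with $B_\epsilon\equiv0$) the leading term of $\hat\Psi_\epsilon(1/\epsilon)$ is a unit-modulus scalar times the eigenvector obtained by following the branch above, i.e.\ $e^{i\vartheta_\epsilon}(\pm e_m)$ up to $O(\epsilon^{1/2})$. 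Since $V_\epsilon(1)$ is diagonal unitary, $V_\epsilon(1)e_m$ is again a phase times $e_m$, and collecting all phases into a single $\theta_\epsilon\in\R$ yields $\|\psi_\epsilon(1/\epsilon^{\alpha+1})-e^{i\theta_\epsilon}e_m\|\le C\,\epsilon^{\min(1/2,\alpha-1)}$ with $C>0$ independent of $\epsilon$.

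The main obstacle I expect is the bookkeeping in the second step: one has to be sure that the uncoupled eigenvalue $d_1$ lands in the central slot $\tfrac{m+1}2$ after the $\tfrac{m-1}2$ transverse crossings on the axis $w_1=0$, that $d_m$ occupies that same slot for large $w_1$ on the axis $w_2=0$, and that the (simple, hence continuous) $\tfrac{m+1}2$-th ordered branch over the open quadrant correctly glues the $e_1$-side at $\tau_1$ to the $e_m$-side at $\tau_2$ --- all of which must be extracted from the ordering information and the ``no third eigenvalue'' clauses of Lemma~\ref{eig:int:odd} (for which one ultimately relies on \cite{adami-boscain}). Everything else is a routine application of the already-established approximation results.
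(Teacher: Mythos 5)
Your proposal is correct and follows exactly the route the paper intends: the paper gives no written proof beyond ``we deduce from Theorem~\ref{nlevel:thm:res} and Lemma~\ref{eig:int:odd}'', and your argument is precisely that deduction spelled out --- verifying the first-order gap condition via Lemmas~\ref{simple:eig} and~\ref{eig:int:odd}, tracking the smooth eigenbranch from $e_1$ to $\pm e_m$ along the three segments of the path, and invoking the adiabatic estimate with $\kappa=1$. Your remarks on restricting to the invariant subspace $\mathrm{span}(e_1,\dots,e_m)$ when $n>m$ and on the slot-counting at the axial crossings are the right details to check, and they are handled correctly.
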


\begin{figure}[h!]
    \centering
    \subfigure[Control path $(u_1,u_2)$ in the plane $(w_1,w_2)$ for $m$ odd.]{\label{loopodd} \includegraphics[scale=0.43]{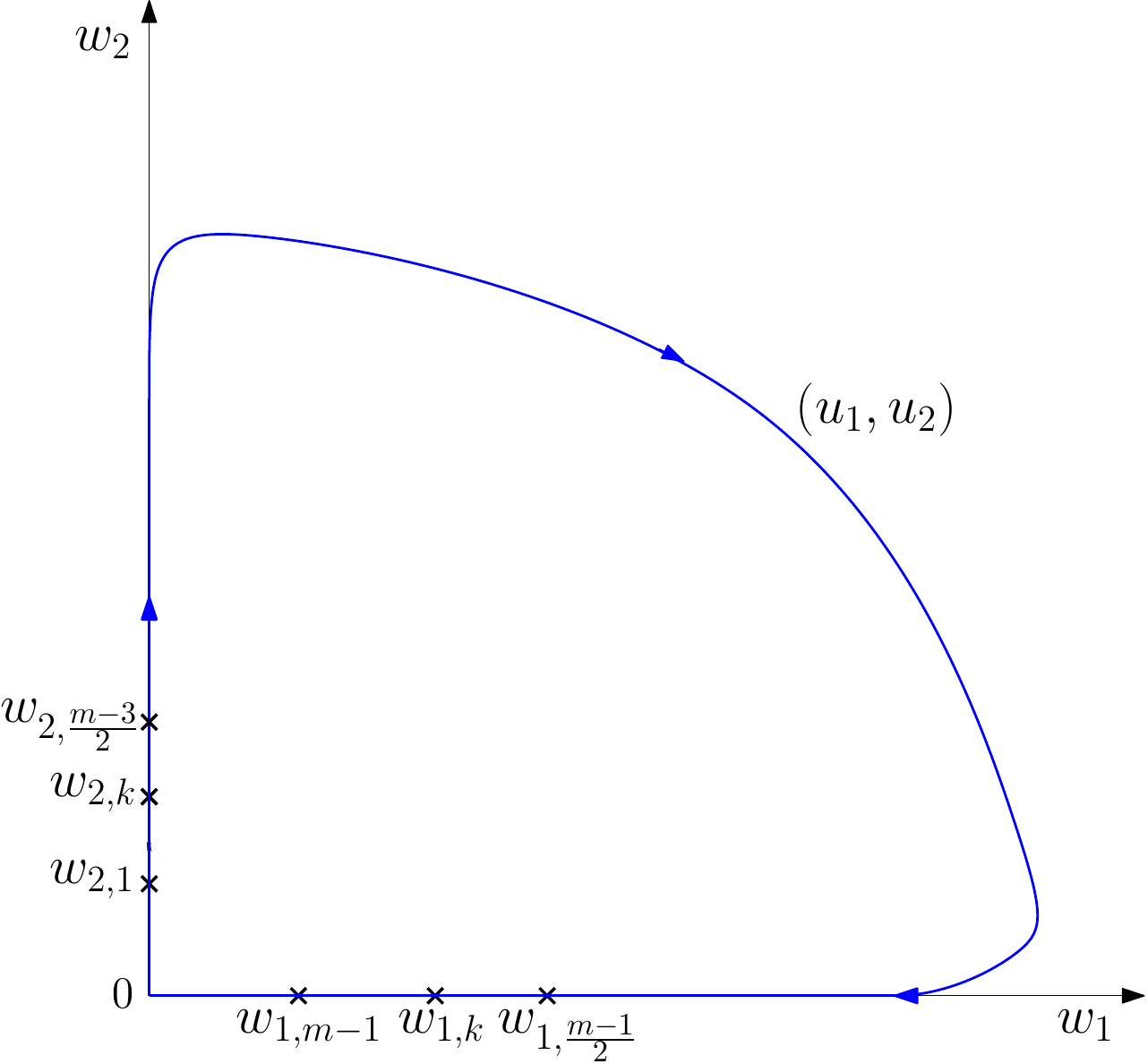}}
    \subfigure[Control path $(u_1,u_2)$ in the plane $(w_1,w_2)$ for $m$ even.]{\label{loopeven} \includegraphics[scale=0.58]{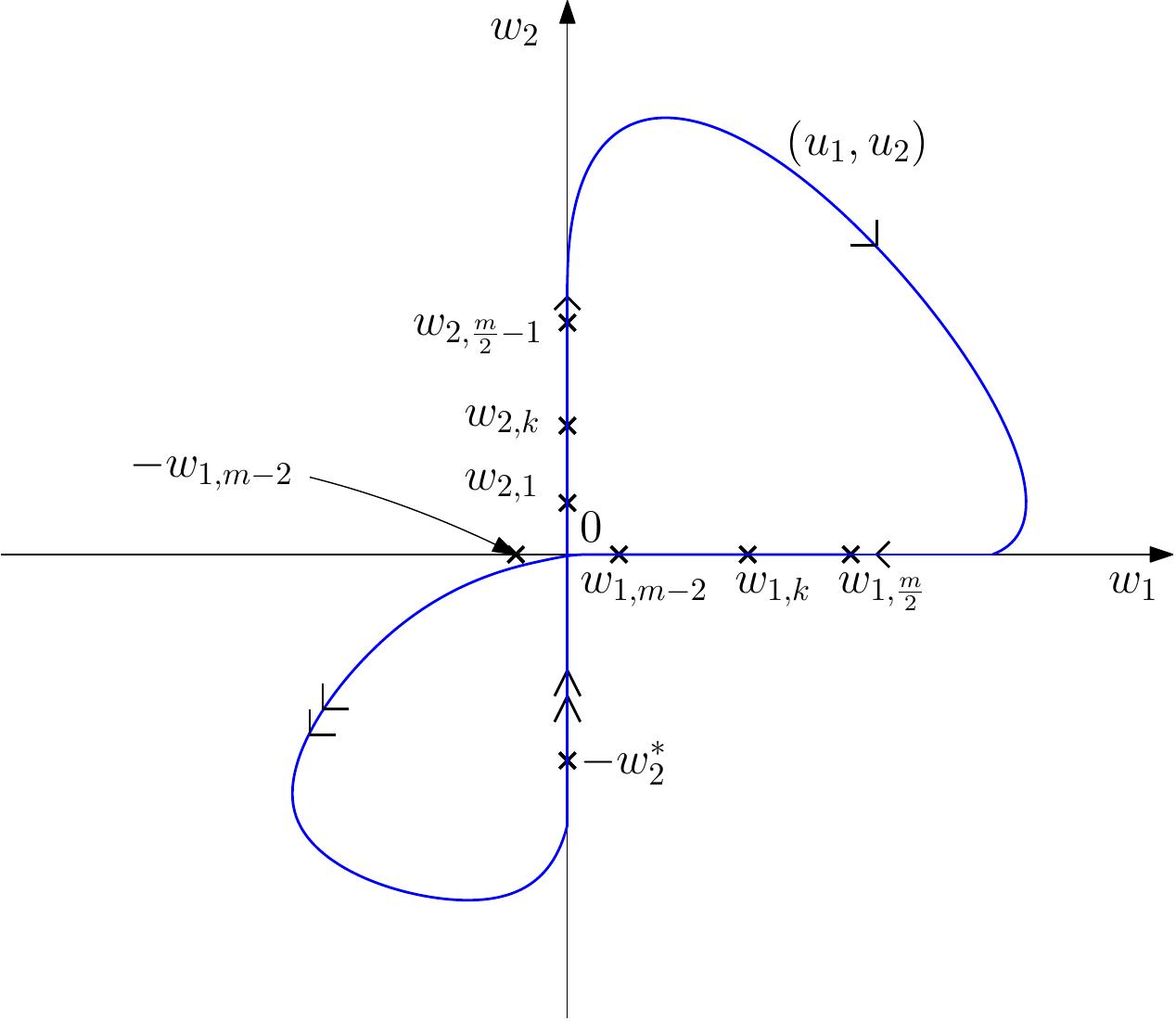}}
    \caption{}
\end{figure}

\begin{remark}[$m$ even]
In the case where $m$ is even we can state a result similar to Proposition~\ref{p:ssodd} by considering a control loop $(u_1,u_2)$ as in Figure~\ref{loopeven}. 
The corresponding solution $\psi_{\epsilon}$ 
makes an approximate transition (up to phases) from $e_1$ to $e_{m-1}$ during the interval of time 
corresponding to the half-loop in the first quadrant and 
then it makes an approximate transition from $e_{m-1}$ to $e_m$ 
when following 
the half-loop in the third quadrant.
\end{remark}

\subsubsection{Simulations}\label{sec:sim2}
Define $H_0$ and $H_1$ as in Section~\ref{sec:sim1}.
%
Let $(u_1,u_2)$ be chosen as in Proposition~\ref{p:ssodd}.
Let $\psi_\epsilon:[0,\frac{1}{\epsilon^{\alpha+1}}]\to\C^n$ be the solution of \eqref{Eq:gen} 
with initial condition $e_1$ associated with the control $u_{\epsilon}$. 
%
We have plotted on Figure~\ref{st1} the 
population levels $p_j(\tau)=|\langle \psi_{\epsilon}(\frac{\tau}{\epsilon^{\alpha+1}}), e_j \rangle|^2$, $j=1,\dots,7$, 
in the case $\alpha=1.2$, with $E_1=0, \ E_2=1, \ E_3 = 2.5, \ E_4 = 3, \ E_5 = 2.2, \ E_6 = 5, \ E_7 = 7$.

\begin{figure}[h!]
\center
\includegraphics[scale=0.7]{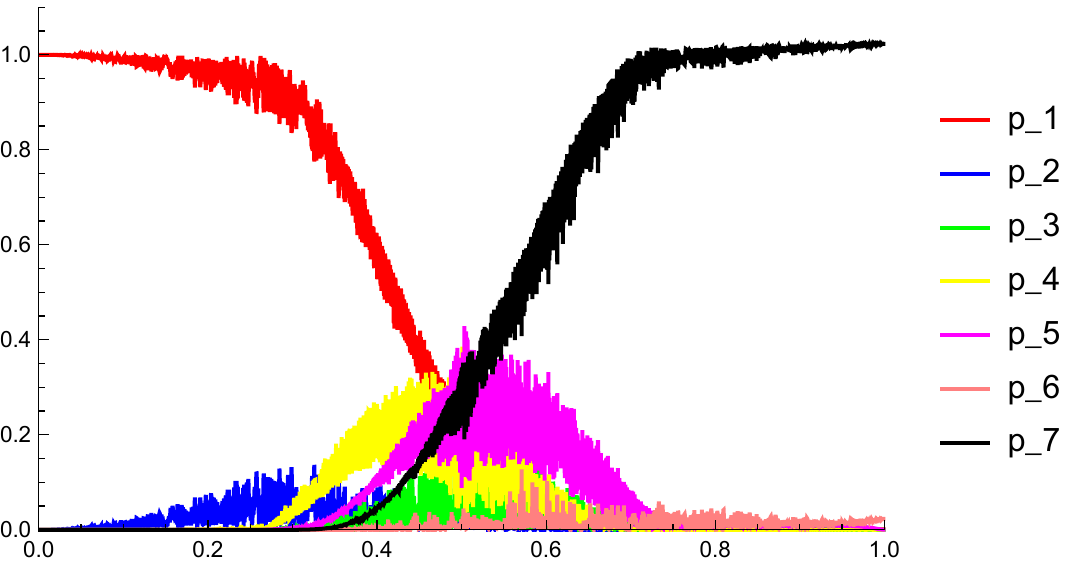}
\caption{Evolution of the different populations $(p_j(\tau))_{j=1}^7$ as functions of the renormalized time $\tau=\epsilon t \in [0,1]$, for $\alpha=1.2$ and $\epsilon=10^{-2}$.}
\label{st1}
\end{figure} 
\begin{remark}
In analogy with Remark~\ref{rem:robust-superchirp}, we can extend Proposition~\ref{p:ssodd} 
 to the case where $H_1$ is replaced by $H_{1,\delta}=\delta H_1$, with $\delta\in [\delta_0,\delta_1]$, $0<\delta_0<\delta_1$.
\end{remark}

\begin{remark}
We have focused in this and in the previous section on the control between eigenstates of the 
drift Hamiltonian $H_0$. As proposed in \cite{Bos} (see also \cite{yatsenko1})
broken adiabatic paths (with discontinuous first order derivatives at conical intersections) can be used to induce superpositions between eigenstates.
One could reason similarly for semi-conical intersections, using discontinuous second order derivatives and exploiting \cite[Proposition 17]{ABS19}.
 The control strategy presented here can therefore  be 
adapted to approximate, using oscillating controls, an adiabatic trajectory leading from an eigenstate of $H_0$ to a superposition of eigenstates with prescribed population levels. 
\end{remark}

This work was supported 
by the ANR projects SRGI ANR-15-CE40-0018 and
Quaco ANR-17-CE40-0007-01.

\bibliographystyle{abbrv}        
\bibliography{biblirwab}

\end{document}